\title[Examples of plane rational curves with two Galois points]{Examples of plane rational curves with two Galois points in positive characteristic, II} 
\author{Satoru Fukasawa and Katsushi Waki}
\subjclass[2010]{14H50, 20G40}
\keywords{Galois point, plane curve, Galois group, projective linear groups}
\address{Department of Mathematical Sciences, Faculty of Science, Yamagata University, Kojirakawa-machi 1-4-12, Yamagata 990-8560, Japan} 
\email{s.fukasawa@sci.kj.yamagata-u.ac.jp} 
\email{waki@sci.kj.yamagata-u.ac.jp}
\thanks{The first author was partially supported by JSPS KAKENHI Grant Number JP19K03438.}
\newtheorem{theorem}{Theorem}
\newtheorem{proposition}{Proposition}
\newtheorem{fact}{Fact}
\newtheorem{lemma}{Lemma}
\begin{document}
\begin{abstract}  
It is proved that there exist plane rational curves of degree twelve (resp. twenty-four) with two different outer Galois points such that the Galois group at one of two Galois points is an alternating group $A_4$ (resp. a symmetric group $S_4$) of degree four, under the assumption that the characteristic of the ground field is eleven (resp. is twenty-three). 
For an alternating group $A_5$ of degree five, a similar existence theorem is confirmed, over a field of characteristic $59$, by GAP system. 
\end{abstract}

\maketitle 

\section{Introduction} 
Let $C \subset \Bbb P^2$ be an irreducible plane curve over an algebraically closed field $k$ of characteristic $p \ge 0$ with $k(C)$ as its function field. 
For a point $P \in \Bbb P^2$, if the function field extension $k(C)/\pi_P^*k(\Bbb P^1)$ induced by the projection $\pi_P$ is Galois, then $P$ is called a Galois point for $C$. 
This notion was introduced by Yoshihara (\cite{miura-yoshihara, yoshihara1}).
Furthermore, if a Galois point $P$ is contained in $\Bbb P^2 \setminus C$, then $P$ is said to be outer. 
The associated Galois group at $P$ is denoted by $G_P$. 

It was difficult to prove the existence of plane curves $C \subset \mathbb{P}^2$ of degree $d=12$ with two Galois points $P_1$, $P_2 \in \Bbb P^2 \setminus C$ such that $G_{P_1}$ is isomorphic to an alternating group $A_4$ of degree four.  
For example, for plane rational curves $C$ of degree $d$ over a field $k$ of characteristic $p=0$, the case where $d=12, 24$ or $60$ is excluded in Yoshihara's paper \cite{yoshihara2}.  
In the present article, it is proved that there exist plane rational curves of this kind, which update the Table in \cite{yoshihara-fukasawa}, as follows. 

\begin{theorem} \label{main1}
Let $p=11$. Then there exist plane rational curves $C \subset \Bbb P^2$ of degree $d=12$ with different outer Galois points $P_1, P_2 \in \Bbb P^2 \setminus C$ such that the associated Galois groups $G_{P_1}$, $G_{P_2}$ are in the following each cases:  
\begin{itemize}
\item[(a)] $G_{P_1} \cong A_4$, $G_{P_2} \cong \Bbb{Z}/12\Bbb{Z}$;   
\item[(b)] $G_{P_1} \cong A_4$, $G_{P_2} \cong D_{12}$, where $D_{12}$ is the dihedral group of order $12$;  
\item[(c)] $G_{P_1} \cong G_{P_2} \cong A_4$.   
\end{itemize}
\end{theorem}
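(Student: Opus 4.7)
The plan is to construct $C$ as the image of an explicit birational parametrization
\[
\phi : \Bbb P^1 \to \Bbb P^2, \qquad t \mapsto (f_1(t) : f_2(t) : 1),
\]
with $f_1, f_2 \in k(t)$ of degree $12$, and then to verify that the projections from $P_1 = (0:1:0)$ and $P_2 = (1:0:0)$ are Galois coverings with the prescribed groups. Under this normal form, $P_i$ (assumed to lie off $C$) is an outer Galois point precisely when $k(t)/k(f_i)$ is Galois, in which case $G_{P_i}$ is identified with the finite subgroup of $\mathrm{PGL}(2,k)$ whose invariant field is $k(f_i)$. The task therefore reduces to choosing two subgroups $H_1, H_2 \subset \mathrm{PGL}(2, \overline{\Bbb F}_{11})$ of order $12$, isomorphic to the desired abstract groups, together with generators $f_1, f_2$ of $k(t)^{H_1}, k(t)^{H_2}$, such that $(f_1:f_2:1)$ is birational onto its image and avoids $P_1, P_2$.

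In every case one fixes $H_1 \cong A_4$ inside $\mathrm{PGL}(2, \Bbb F_{11})$ (such an embedding exists and is unique up to conjugacy, since $11 \nmid 12$), and writes down an explicit generator $f_1$ of $k(t)^{A_4}$ of degree $12$, obtained from the classical tetrahedral polynomial invariants of degrees $4,6,4$ and the degree-$12$ relation among them. For $H_2$: in case (a) take $\langle t \mapsto \zeta t \rangle$ with $\zeta \in \overline{\Bbb F}_{11}$ a primitive $12$th root of unity and $f_2 = t^{12}$; in case (b) take $\langle t \mapsto \zeta_6 t,\ t \mapsto 1/t \rangle \cong D_{12}$ and $f_2 = t^6 + t^{-6}$; in case (c) take a second copy of $A_4$ of the form $\sigma H_1 \sigma^{-1}$ for a Möbius transformation $\sigma$ not normalising $H_1$, and $f_2 = f_1 \circ \sigma^{-1}$. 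The remaining freedom is a choice of coordinate on $\Bbb P^1$ aligning $H_1$ with $H_2$, which I would use to force birationality of $\phi$, irreducibility of $C$, and the conditions $P_1, P_2 \notin C$. All of these reduce to polynomial conditions in finitely many parameters over $\overline{\Bbb F}_{11}$, tractable either by hand or via the computer-algebra technique the authors employ for the $A_5$, $p = 59$ statement.

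The main obstacle is the alignment step: proving that the Möbius $\sigma$ (or the analogous coordinate choice in cases (a) and (b)) can be chosen so that the map $(f_1:f_2:1)$ is birational with image of exact degree $12$, rather than factoring through a rational curve of smaller degree, and so that the two Galois points avoid $C$. The non-existence of such an alignment in characteristic zero for $d = 12$ with $G_{P_1} \cong A_4$ is precisely the obstruction identified in \cite{yoshihara2}; the present examples must therefore exploit arithmetic coincidences special to $\overline{\Bbb F}_{11}$ (in particular, extra identifications among the tetrahedral orbits, the $12$th roots of unity, and the fixed-point sets of the order-$6$ elements in play) that simply fail over $\Bbb C$. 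Locating and exhibiting a workable $\sigma$, and then checking birationality and the outer condition on $P_1, P_2$, is the technical heart of the argument.
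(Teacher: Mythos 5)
Your overall strategy---realize $C$ as the image of $(f_1:f_2:1)$ where $f_i$ generates the invariant field $k(t)^{H_i}$ of an order-$12$ subgroup $H_i\subset PGL(2,k)$, so that the two coordinate projections become the two Galois coverings---is exactly the mechanism behind the criterion the paper invokes (Fact~\ref{criterion1}, quoted from \cite{fukasawa}). But your proposal stops at the point where the actual proof begins. You correctly flag that ``locating and exhibiting a workable $\sigma$, and then checking birationality and the outer condition'' is the technical heart, and then you do not do it; as written, no pair $(H_1,H_2)$ is exhibited and no condition is verified, so the existence claim is not established.

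The missing idea is the precise form of the alignment condition and the arithmetic coincidence that makes it checkable. Fact~\ref{criterion1} reduces everything to two finite verifications: (a) $G_1\cap G_2=\{1\}$ (this is what forces $k(f_1,f_2)=k(t)^{G_1\cap G_2}=k(t)$, i.e.\ birationality) and (b) the existence of a \emph{common orbit} $G_1Q=G_2Q$ of full length $12$ (this is what pins the degree at exactly $12$ and keeps $P_1,P_2$ off $C$). The coincidence special to $p=11$ is not some identification among tetrahedral invariants, but simply that $\#\,\Bbb P^1(\Bbb F_{11})=12$: one can choose $G_1\cong A_4$, a cyclic group of order $12$, a dihedral group of order $12$, and a conjugate copy of $A_4$, all inside $PGL(2,\Bbb F_{11})$ and all acting (sharply) transitively on the twelve $\Bbb F_{11}$-rational points, so that condition (b) holds automatically with $G_1Q=\Bbb P^1(\Bbb F_{11})=G_2Q$ for any rational $Q$. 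What remains is the finite computation $G_1\cap G_2=\{1\}$ for each pair, which the paper carries out by listing the matrices of the order-$2$ and order-$3$ elements of $G_1$ and comparing (Propositions~\ref{calculation3}, \ref{calculation5}, \ref{calculation6}). Without identifying this reduction and producing the explicit subgroups, your argument cannot be completed along the lines you sketch; in particular, your worry about the map ``factoring through a rational curve of smaller degree'' is resolved exactly by condition (a), not by a separate birationality check.
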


For a symmetric group $S_4$ of degree four, we have the following theorem. 

\begin{theorem} \label{main2}
Let $p=23$. Then there exist plane rational curves $C \subset \Bbb P^2$ of degree $d=24$ with different outer Galois points $P_1, P_2 \in \Bbb P^2 \setminus C$ such that the associated Galois groups $G_{P_1}$, $G_{P_2}$ are in the following each cases: 
\begin{itemize}
\item[(a)] $G_{P_1} \cong S_4$, $G_{P_2} \cong \Bbb{Z}/24\Bbb{Z}$;   
\item[(b)] $G_{P_1} \cong S_4$, $G_{P_2} \cong D_{24}$;  
\item[(c)] $G_{P_1} \cong G_{P_2} \cong S_4$. 
\end{itemize}
\end{theorem}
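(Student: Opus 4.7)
The plan is to follow the construction used to prove Theorem \ref{main1}, substituting the symmetric group $S_4$ for the alternating group $A_4$ and exploiting the numerical coincidence $|S_4|=24=p+1$ for $p=23$ (paralleling $|A_4|=12=p+1$ for $p=11$). This equality makes $S_4$ embed naturally into $\mathrm{PGL}_2(\mathbb{F}_{23})$ as a subgroup whose elements have order coprime to the characteristic, which is the starting point for the $\mathrm{PGL}_3$-action on $\mathbb{P}^2$ producing the desired Galois structure.

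First I would realize $S_4$ as a subgroup $\widetilde{S_4}\subset\mathrm{PGL}_3(k)$ fixing the point $P_1=(0:0:1)$, by letting a chosen copy of $S_4\subset\mathrm{PGL}_2(k)$ act on the first two homogeneous coordinates. Next, I would construct a plane rational curve $C$ of degree $24$ as the image of a parametrization $t\mapsto(\phi(t):\psi(t):1)$ in which $\phi,\psi\in k(t)$ are chosen so that $\pi_{P_1}$ has degree $24$ on $C$ and each of its generic fibres is a single $\widetilde{S_4}$-orbit. This property is equivalent to $k(C)/\pi_{P_1}^*k(\mathbb{P}^1)$ being Galois with group $G_{P_1}\cong S_4$, yielding the first Galois point in all three cases.

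The second Galois point $P_2$ comes from imposing additional symmetry on $C$. In case (a), one augments $\widetilde{S_4}$ by a cyclic subgroup of order $24$ fixing $P_2$, realised by a diagonal element of $\mathrm{PGL}_3(k)$ of order $24$ (which is available since $23\nmid 24$). In case (b), the cyclic group is replaced by a dihedral group $D_{24}$ of order $24$, obtained by adjoining a suitable involution of $\mathbb{P}^2$. In case (c), one produces a second copy of $S_4$ inside the projective automorphism group of $C$, fixing $P_2\neq P_1$: concretely, one searches for an involution $\sigma\in\mathrm{PGL}_3(k)$ that preserves $C$ and exchanges the two Galois points, so that $\sigma\widetilde{S_4}\sigma^{-1}$ is the $S_4$ acting at $P_2$.

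The main obstacle is to write down an explicit parametrization $(\phi,\psi)$ realising all the required symmetries simultaneously, especially in case (c). The compatibility of two non-conjugate $S_4$-actions on the same degree-$24$ rational curve reduces to a small system of polynomial equations in the coefficients of $\phi$ and $\psi$, and solvability of this system is what singles out the characteristic $p=23$. One would attempt to write $\phi$ and $\psi$ in terms of the fundamental $S_4$-invariants on $\mathbb{P}^1$ (of degrees $6$, $8$ and $12$), reducing the problem to matching a finite list of coefficients in $\mathbb{F}_{23}$, in direct analogy with the $A_4/\mathbb{F}_{11}$ case of Theorem \ref{main1}.
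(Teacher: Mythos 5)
There are two genuine problems with your proposal, one conceptual and one of completeness. Conceptually, your framework for producing $G_{P_1}\cong S_4$ does not work: if a subgroup $\widetilde{S_4}\subset PGL(3,k)$ fixes $P_1=(0:0:1)$ and is to lie in the Galois group of $\pi_{P_1}$, it must preserve every fibre of $\pi_{P_1}$, i.e.\ fix every line through $P_1$; but the block-diagonal copy of $S_4\subset PGL(2,k)$ acting on the first two coordinates acts on the pencil of lines through $(0:0:1)$ exactly by that $PGL(2,k)$-action, which is nontrivial. Worse, any subgroup of $PGL(3,k)$ fixing $P_1$ and acting trivially on the pencil sits inside a group of the form $k^2\rtimes k^*$, whose finite subgroups of order prime to $p$ are cyclic, so no linear $S_4$-action of this kind exists at all. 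For these highly singular rational curves the Galois group acts on the normalization $\Bbb P^1$ of $C$, not linearly on $\Bbb P^2$; the correct setting is $PGL(2,k)={\rm Aut}(\Bbb P^1)$ throughout, and the curve is recovered afterwards from the two quotient maps $\Bbb P^1\to\Bbb P^1/G_i$. This is precisely the content of the criterion from \cite{fukasawa} (Fact \ref{criterion1}): one needs two subgroups $G_1,G_2\subset PGL(2,k)$ of order $24$ with $G_1\cap G_2=\{1\}$ and a common orbit of length $24$.

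The second problem is that even granting a correct framework, your proposal never establishes existence: the explicit parametrization, the ``small system of polynomial equations,'' and the verification that it is solvable in $\mathbb{F}_{23}$ are all deferred (``one would attempt to write\dots'', ``the main obstacle is\dots''). That deferred step is the entire mathematical content of the theorem. In the paper this is done by exhibiting explicit matrices over $\mathbb{F}_{23}$ generating $G_1\cong S_4$, $G_2\cong\Bbb Z/24\Bbb Z$, $G_3\cong D_{24}$ and $G_4=\iota G_1\iota^{-1}\cong S_4$, noting that each acts transitively (hence, by the order count $|\Bbb P^1(\Bbb F_{23})|=24$, sharply transitively) on $\Bbb P^1(\Bbb F_{23})$ --- which gives the common orbit --- and then checking $G_1\cap G_j=\{1\}$ case by case, using the block system $\{O_1,O_2,O_3,O_4\}$ of the $S_4$-action. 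Your observation that $24=p+1$ is the right heuristic for why $p=23$ works (it makes the common regular orbit available), but by itself it proves nothing; without the explicit subgroups and the intersection computations the argument does not close.
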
 

For an alternating group $A_5$ of degree five, we have the following theorem, which will be confirmed by GAP system \cite{gap}. 

\begin{theorem} \label{main3}
Let $p=59$. Then there exist plane rational curves $C \subset \Bbb P^2$ of degree $d=60$ with different outer Galois points $P_1, P_2 \in \Bbb P^2 \setminus C$ such that the associated Galois groups $G_{P_1}$, $G_{P_2}$ are in the following each cases:  
\begin{itemize}
\item[(a)] $G_{P_1} \cong A_5$, $G_{P_2} \cong \Bbb{Z}/60\Bbb{Z}$;   
\item[(b)] $G_{P_1} \cong A_5$, $G_{P_2} \cong D_{60}$;  
\item[(c)] $G_{P_1} \cong G_{P_2} \cong A_5$. 
\end{itemize}
\end{theorem}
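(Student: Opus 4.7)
The plan follows the strategy already carried out for Theorems \ref{main1} and \ref{main2}: realize the target group $A_5$ as a finite subgroup of $PGL(2,k)$ acting on the parameter line $\mathbb{P}^1$, and construct a birational parametrization of $C$ so that the two projections from $P_1,P_2$ correspond to quotients by two different finite subgroups of $PGL(2,k)$. The choice $p=59$ is dictated by the numerical identities $|A_5|=60=p+1$ and $59\equiv -1\pmod{10}$: the former makes $PGL(2,k)$ contain cyclic and dihedral subgroups of orders $60$ and $120$ (sitting inside a non-split torus and its normalizer), while the latter guarantees, via Dickson's classification, that $A_5\subset PSL(2,59)$.

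First I would fix explicit matrix generators for a subgroup $H_1\subset PGL(2,k)$ isomorphic to $A_5$ and compute a primitive invariant $h_1(t)\in k(t)^{H_1}$, automatically a rational function of degree $60$. For each of the cases (a)--(c), I would then identify a second subgroup $H_2\subset PGL(2,k)$ of the prescribed isomorphism type ($\mathbb{Z}/60\mathbb{Z}$, $D_{60}$, or $A_5$) together with a primitive invariant $h_2(t)\in k(t)^{H_2}$ of degree $60$. Setting $\varphi(t)=(h_1(t):h_2(t):1)$ after homogenization, and letting $C$ denote the image, one has $\pi_{P_1}\circ\varphi=h_2$ and $\pi_{P_2}\circ\varphi=h_1$ for $P_1=(1:0:0)$ and $P_2=(0:1:0)$; both projections are therefore Galois by construction, with the required groups.

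What remains is to verify a list of geometric conditions, each polynomial in the matrix entries defining $H_1,H_2$: that $\varphi$ is birational, equivalently $k(h_1,h_2)=k(t)$ so that $\deg\pi_{P_i}=60$; that $\deg C=60$ (no unexpected factorization drops the degree); that $P_1,P_2$ lie outside $C$ and are distinct; and, in case (c), that $H_1,H_2$ are genuinely different subgroups so that $P_1\ne P_2$ as Galois points. The main obstacle is that no structural argument forces a randomly chosen pair $(H_1,H_2)$ to fulfil all these conditions simultaneously, so one must perform a finite search over suitable pairs in $PGL(2,\mathbb{F}_{59^m})$ for small $m$. This search, together with the subsequent verification---resultant computations for birationality, degree of the image, non-incidence of $P_1,P_2$ with $C$, and confirmation of the Galois group isomorphism types---is implemented in the GAP system \cite{gap}, which produces explicit examples realizing each of the three cases.
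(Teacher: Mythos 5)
Your proposal is correct and follows essentially the same route as the paper: both reduce the theorem to exhibiting, inside $PGL(2,\mathbb{F}_{59})$, a subgroup isomorphic to $A_5$ together with a second order-$60$ subgroup of the prescribed type, and both delegate the final verification to GAP. The only difference is presentational: the paper invokes the packaged criterion of Fact~\ref{criterion1} (trivial intersection plus a common orbit of length $60$, which here is necessarily $\mathbb{P}^1(\mathbb{F}_{59})$ itself since $p+1=60$), whereas you re-derive the underlying parametrization $(h_1:h_2:1)$ by invariants and list the equivalent conditions (birationality, degree of the image, non-incidence of $P_1,P_2$) to be checked directly.
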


\section{Proof of Theorem \ref{main1}}

According to \cite[Theorem 1 and Remark 1]{fukasawa} and L\"{u}roth's theorem, the following theorem holds (see also \cite[Fact 2]{fukasawa-higashine}, \cite[Problem 1]{fukasawa-waki}). 

\begin{fact} \label{criterion1}
Let $Q \in \Bbb P^1$ and let $G_1, G_2 \subset PGL(2, k)$ be different finite subgroups with $|G_1|=|G_2|$. 
If the two conditions
\begin{itemize}
\item[(a)] $G_1 \cap G_2=\{1\}$, and
\item[(b)] the orbits $G_1Q, G_2Q$ have length $|G_1|=|G_2|$ and $G_1Q=G_2Q$
\end{itemize} 
are satisfied, then there exists a plane rational curve $C \subset \Bbb{P}^2$ of degree $|G_1|=|G_2|$ with two different outer Galois points $P_1, P_2 \in \Bbb P^2 \setminus C$ such that $G_{P_i}=G_i$ for $i=1, 2$.  
\end{fact}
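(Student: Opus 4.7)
The plan is to realize the desired plane rational curve as the image of a morphism $\phi : \Bbb P^1 \to \Bbb P^2$ whose three coordinate functions encode the intermediate subfields associated to $G_1$ and $G_2$. Write $L := k(\Bbb P^1)$ and set $K_i := L^{G_i}$ for $i=1,2$. By L\"uroth's theorem, each $K_i$ is of the form $k(F_i)$ for some $F_i \in L$; since $[L:K_i] = |G_i| = d$, the function $F_i$ has degree $d$ as a map $\Bbb P^1 \to \Bbb P^1$, and it realizes the quotient $\Bbb P^1 \to \Bbb P^1/G_i$ on the level of function fields.

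The orbit hypothesis is used first to normalize $F_1, F_2$ so that they share a common polar divisor. Because $|G_iQ| = |G_i|$, the stabilizer of $Q$ in $G_i$ is trivial, so the fiber of $\Bbb P^1 \to \Bbb P^1/G_i$ over the image of $Q$ is the unramified orbit $G_iQ$. Rescaling the coordinate on $\Bbb P^1/G_i$, I may assume $F_i(Q) = \infty$, whereupon the polar divisor of $F_i$ equals $\sum_{R \in G_iQ} R$ with simple poles. Since $G_1Q = G_2Q$ by hypothesis, $F_1$ and $F_2$ have the \emph{same} polar divisor $D$, of degree $d$.

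Next, write $F_i = f_i/g$ with common denominator $g$ vanishing simply on $D$ (after placing $D$ in the affine chart). The simplicity of the pole of $F_i$ along each root of $g$ forces $f_i$ and $g$ to be coprime, so the map $\phi : t \mapsto [f_1(t) : f_2(t) : g(t)]$ is a morphism $\Bbb P^1 \to \Bbb P^2$. Set $P_1 := [0:1:0]$ and $P_2 := [1:0:0]$: projection from $P_i$ onto a complementary line returns the ratio $[f_i : g] = [F_i : 1]$, which is precisely the Galois cover associated to $K_i \subset L$. Thus, once $\phi$ is known to be birational onto its image $C$, the linear system defining $\phi$ has degree $d$, so $\deg C = d$; furthermore $P_i \in \Bbb P^2 \setminus C$ (since $f_i, g$ share no zero) is an outer Galois point with group $G_i$, and $P_1 \neq P_2$ follows because $G_1 \neq G_2$.

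The remaining point, which I expect to be the main obstacle, is the birationality of $\phi$, equivalent to the equality $k(F_1, F_2) = L$. This is where the hypothesis $G_1 \cap G_2 = \{1\}$ enters. Setting $G := \langle G_1, G_2 \rangle \subset PGL(2,k)$, the extension $L/L^G$ is Galois with group $G$, and the fundamental theorem of Galois theory sends intersections of subgroups to compositums of fixed fields. Therefore $K_1 \cdot K_2 = k(F_1, F_2)$ is the fixed field of $G_1 \cap G_2 = \{1\}$, which is all of $L$. This concludes the construction and hence the proof.
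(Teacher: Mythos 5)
The paper never proves this statement: it is imported as a ``Fact'' from \cite[Theorem 1 and Remark 1]{fukasawa} together with L\"{u}roth's theorem, so there is no in-paper proof to compare against. Your construction is, as far as I can tell, precisely the standard argument underlying that citation --- take generators $F_i$ of $L^{G_i}$ normalized so that both have the reduced polar divisor $G_1Q=G_2Q$ (this is where condition (b) enters, giving an unramified fibre of full length), map $\mathbb{P}^1$ into $\mathbb{P}^2$ by $[f_1:f_2:g]$, and read off the two projections as the two quotient maps --- and it is essentially correct: the common denominator makes the linear system base-point-free of degree $d$ and keeps $[0:1:0]$, $[1:0:0]$ off the image, and condition (a) is exactly what forces birationality.

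One step needs repair. In the birationality argument you set $G=\langle G_1,G_2\rangle$ and assert that $L/L^G$ is Galois with group $G$. But $G$ need not be finite (two finite subgroups of $PGL(2,k)$ with trivial intersection can generate an infinite group), in which case $L^G$ may reduce to $k$ and there is no Galois correspondence to invoke for $G$. The conclusion $K_1K_2=L$ is still true and should instead be derived from the single finite Galois extension $L/K_1$: the intermediate field $K_1K_2$ corresponds to $H=\{h\in G_1 : h|_{K_2}=\mathrm{id}\}$, and since $L/K_2$ is Galois, every automorphism of $L$ fixing $K_2$ already lies in $G_2$, whence $H\subseteq G_1\cap G_2=\{1\}$ and $K_1K_2=L$. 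With that substitution the proof is complete. (Two minor remarks: $P_1\neq P_2$ is automatic from your choice $[0:1:0]\neq[1:0:0]$ and does not require $G_1\neq G_2$; and birationality for $d>1$ already guarantees that $f_1,f_2,g$ are linearly independent, so the image is genuinely a degree-$d$ curve rather than a line.)
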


To prove Theorem \ref{main1} (a) (resp. Theorem \ref{main1} (b), Theorem \ref{main1} (c)), we have to prove the existence of a pair  $(G_1, G_2)$ of subgroups $G_1, G_2 \subset PGL(2, k)$ such that $G_1 \cong A_4$ and $G_2 \cong \Bbb Z/12\Bbb Z$ (resp. $G_1 \cong A_4$ and $G_2 \cong D_{12}$, $G_1 \cong G_2 \cong A_4$), and conditions (a) and (b) in Fact \ref{criterion1} are satisfied for $(G_1, G_2)$.  

Let $\alpha=2 \in \mathbb{F}_{11}$, which is a primitive element. 
Let $\sigma, \tau, \eta \in {\rm Aut}(\Bbb P^1) \cong PGL(2, k)$ be represented by matrices
$$ 
A_{\sigma}=\left ( \begin{array}{cc}
0 & \alpha \\
1 & 0 
\end{array}\right), \ 
A_{\tau}=
\left (\begin{array}{cc}
1 & \alpha \\
-1 & -1 
\end{array}\right), \
A_{\eta}=\left (
\begin{array}{cc}
\alpha & \alpha^4 \\
1 & \alpha^2 
\end{array} \right)
$$
respectively, that is, $\sigma(s, t)=(s, t)A_{\sigma}$, $\tau(s, t)=(s, t)A_{\tau}$, $\eta(s, t)=(s, t)A_{\eta}$. 
The following lemma can be confirmed by hand. 

\begin{lemma} \label{calculation1}
For matrices $A, B \in GL(2, k)$, if the classes of $A, B$ are the same, then it is denoted by $A \sim B$. 
Then: 
\begin{itemize}
\item[(a)] $\sigma$, $\tau$ are of order two. 
\item[(b)] $\eta$ is of order three. 
\item[(c)] $A_{\sigma}A_{\tau} \sim A_{\tau}A_{\sigma}$. 
\item[(d)] $A_{\eta}^{-1}A_{\sigma}A_{\eta} \sim A_{\tau}$. 
\item[(e)] $A_{\eta}^{-1}A_{\tau}A_{\eta} \sim A_{\sigma}A_{\tau}$.
\item[(f)] The group $G_1:=\langle \sigma, \tau, \eta \rangle$ is isomorphic to $A_4$. 
\item[(g)] The following three matrices represent all elements of $G_1 \cong A_4$ of order two: 
$$ 
\left ( \begin{array}{cc}
0 & 2 \\
1 & 0 
\end{array}\right), \ 
\left (\begin{array}{cc}
10 & 9 \\
1 & 1 
\end{array}\right), \
\left (
\begin{array}{cc}
9 & 9 \\
1 & 2
\end{array} \right). 
$$
\item[(h)] The following eight matrices represent all elements of $G_1 \cong A_4$ of order three: 
\begin{eqnarray*} 
& & 
\left ( \begin{array}{cc}
2 & 5 \\
1 & 4 
\end{array}\right), \ 
\left (\begin{array}{cc}
7 & 5 \\
1 & 9 
\end{array}\right), \
\left (
\begin{array}{cc}
4 & 1 \\
1 & 6
\end{array} \right),  
\left ( \begin{array}{cc}
3 & 4 \\
1 & 10 
\end{array}\right), \\
& & 
\left (\begin{array}{cc}
1 & 4 \\
1 & 8 
\end{array}\right), \
\left (
\begin{array}{cc}
8 & 3 \\
1 & 5
\end{array} \right),  
\left ( \begin{array}{cc}
6 & 3 \\
1 & 3 
\end{array}\right), \ 
\left (\begin{array}{cc}
5 & 4 \\
1 & 7 
\end{array}\right). 
\end{eqnarray*}
\item[(i)] The group $G_1$ acts on the set $\Bbb P^1(\Bbb F_{11})$ transitively, where $\Bbb P^1(\Bbb F_{11})$ is the set of all $\Bbb F_{11}$-rational points of $\Bbb P^1$. 
\end{itemize}
\end{lemma}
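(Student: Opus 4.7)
The overall plan is to treat the lemma as a sequence of $2 \times 2$ matrix computations in $GL(2, \mathbb{F}_{11})$ followed by group-theoretic bookkeeping. Since $\alpha = 2$ is an explicit element, every equation in parts (a)--(e) reduces to a matrix identity modulo scalars, which I would verify by direct multiplication: for (a) and (b) one computes $A_\sigma^2$, $A_\tau^2$, and $A_\eta^3$ and checks that each is a scalar matrix, and for (c)--(e) one computes both sides and confirms that their quotient is a scalar matrix.

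With (a)--(e) in hand, (f) is immediate from the standard presentation of $A_4$ as a semidirect product $V \rtimes \mathbb{Z}/3\mathbb{Z}$ of a Klein four-group by a cyclic group of order three acting as a $3$-cycle on the three involutions. The relations $\sigma^2 = \tau^2 = 1$ and $\sigma\tau = \tau\sigma$, together with the visible nontriviality of $\sigma$, $\tau$, and $\sigma\tau$, make $V := \langle \sigma, \tau \rangle$ a Klein four-group; relations (d) and (e) say that conjugation by $\eta$ cyclically permutes the three involutions of $V$, so $V$ is normal in $G_1$, the group $G_1 = V \rtimes \langle \eta \rangle$ has order $12$, and the resulting surjection from $A_4$ is forced to be an isomorphism.

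Parts (g) and (h) reduce to listing the twelve elements of $G_1$ in the form $v\eta^i$ with $v \in V$ and $i \in \{0,1,2\}$: the three elements of order two are exactly the nontrivial elements of $V$, while the eight elements of order three are the $v\eta^i$ with $i \neq 0$, all of which are $V$-conjugates of $\eta^{\pm 1}$. One then computes each matrix product and rescales so that the lower-left entry becomes $1$ in order to match the displayed representatives. For (i), since $|G_1| = |\mathbb{P}^1(\mathbb{F}_{11})| = 12$, transitivity is equivalent to the orbit of a single point having length $12$, so I would compute the orbit of, say, $(1:0)$ by iteratively applying $\sigma$, $\tau$, $\eta$ and verify that all twelve $\mathbb{F}_{11}$-rational points are reached. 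The content of the lemma is purely computational; the only real obstacle is the risk of arithmetic error in $\mathbb{F}_{11}$ when carrying out the eight matrix products needed for (h) and normalising each to the form displayed in the statement.
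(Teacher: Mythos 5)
Your plan is correct and matches the paper's approach: the paper offers no argument beyond ``can be confirmed by hand,'' and your outline (direct matrix arithmetic in $GL(2,\mathbb{F}_{11})$ for (a)--(e), the $V\rtimes\mathbb{Z}/3\mathbb{Z}$ presentation of $A_4$ for (f), enumeration of the twelve elements as $v\eta^i$ for (g)--(h), and a single orbit computation for (i)) is exactly the intended verification. The extra structural remark that the three involutions are the nontrivial elements of the normal Klein four-group $V$ and the eight order-three elements are the $v\eta^{\pm1}$ is a sound way to organize the bookkeeping.
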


Let $\xi$ be an automorphism of $\Bbb P^1$ represented by  
$$ 
A_{\xi}=
\left ( \begin{array}{cc}
\alpha & 1 \\
1 & 0 
\end{array} \right ), 
$$
and let $G_2:=\langle \xi \rangle$. 
It is not difficult to confirm the following lemma. 

\begin{lemma} \label{calculation2}
The order of $\xi$ is twelve, and $G_2$ acts on $\Bbb P^1(\Bbb F_{11})$ transitively. 
\end{lemma}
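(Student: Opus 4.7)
The plan is to verify both assertions in Lemma \ref{calculation2} by direct computation over $\Bbb F_{11}$.

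First, for the order of $\xi$, recall that the order in $PGL(2,k)$ of the element represented by $A_\xi$ is the smallest positive $n$ for which $A_\xi^n$ is a scalar matrix. Since the proper divisors of $12$ are $1,2,3,4,6$, it is enough to compute $A_\xi^m$ for $m \in \{1,2,3,4,6\}$ and verify that none of these is a scalar, and then to check that $A_\xi^{12}$ is a scalar matrix (one expects $A_\xi^{12}=-I$). As a conceptual sanity check, the characteristic polynomial of $A_\xi$ is $\lambda^2-2\lambda-1$, with discriminant $8$; since $2^5 \equiv -1 \pmod{11}$, Euler's criterion shows that $2$ is a non-square in $\Bbb F_{11}$, so the eigenvalues lie in $\Bbb F_{121}\setminus \Bbb F_{11}$. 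The order of $\xi$ then equals the multiplicative order of $\lambda_1/\lambda_2$ in $\Bbb F_{121}^\times$, which one expects to work out to exactly $12$.

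For the transitivity statement, note that $|\Bbb P^1(\Bbb F_{11})| = 12 = |G_2|$, so $G_2$ acts transitively on $\Bbb P^1(\Bbb F_{11})$ if and only if some point has an orbit of length $12$, equivalently if some point has trivial stabilizer. The cleanest route is to fix $Q = (1:0)$ and to list the twelve iterates $\xi^i(Q) = Q A_\xi^i$ for $i=0,1,\dots,11$; since $A_\xi$ has $\Bbb F_{11}$-entries the orbit automatically lies in $\Bbb P^1(\Bbb F_{11})$, so one only needs to confirm that these twelve points are pairwise distinct, and they will then exhaust $\Bbb P^1(\Bbb F_{11})$.

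No step involves a conceptual obstacle; the work is simply careful bookkeeping of the arithmetic modulo $11$. In fact, a single orbit computation settles both assertions at once, because producing an orbit of length $12$ already forces the order of $\xi$ to be at least $12$, and this combined with the computation $A_\xi^{12} = -I$ pins the order down to exactly $12$.
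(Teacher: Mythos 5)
Your proposal is correct and is essentially the verification the paper intends: the authors omit the proof ("it is not difficult to confirm"), and your plan — checking that $A_\xi^{12}$ is scalar while $A_\xi^m$ is not for the proper divisors $m$ of $12$, and exhibiting a length-$12$ orbit in the $12$-point set $\Bbb P^1(\Bbb F_{11})$ — is the straightforward hand computation they have in mind. Your side remarks also check out ($\det A_\xi=-1$, trace $2$, discriminant $8=2^3$ a non-square since $2^5\equiv -1 \pmod{11}$), and the observation that a single orbit of length $12$ together with $A_\xi^{12}=-I$ pins down the order is a nice economy.
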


For the following proposition, we give the proof of assertion (a), which is the most complicated part of the proof of Theorem \ref{main1} (a).  

\begin{proposition} \label{calculation3}
The pair $(G_1, G_2)$ satisfies the following two conditions:  
\begin{itemize}
\item[(a)] $G_1 \cap G_2=\{1\}$. 
\item[(b)] Let $Q \in \Bbb P^1(\Bbb F_{11})$. 
Then $G_1Q=\Bbb P^1(\Bbb F_{11})=G_2Q$. 
\end{itemize}
In particular, conditions (a) and (b) in Fact \ref{criterion1} are satisfied for the pair $(G_1, G_2)$.  
\end{proposition}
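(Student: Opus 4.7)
The plan is to dispose of assertion (b) immediately using the transitivity statements already in hand, and then to reduce assertion (a) to a short finite check of explicit matrices.

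For (b), note that $|\Bbb P^1(\Bbb F_{11})|=12=|G_1|=|G_2|$. By Lemma \ref{calculation1}(i), the group $G_1$ acts transitively on $\Bbb P^1(\Bbb F_{11})$, and by Lemma \ref{calculation2} the same holds for $G_2$. Hence for any $Q\in \Bbb P^1(\Bbb F_{11})$ we have $G_1Q=\Bbb P^1(\Bbb F_{11})=G_2Q$, and each orbit automatically has length $12$. This is exactly condition (b) of Fact \ref{criterion1}.

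For (a), the key structural input is that $G_1\cong A_4$ contains only elements of order $1$, $2$, or $3$, while $G_2\cong \Bbb Z/12\Bbb Z$ is cyclic and therefore has a unique subgroup of order $2$, namely $\langle \xi^6\rangle$, and a unique subgroup of order $3$, namely $\{1,\xi^4,\xi^8\}$. Consequently, any nontrivial element of $G_1\cap G_2$ must lie in the three-element set $\{\xi^4,\xi^6,\xi^8\}$. So it suffices to compute representatives of $A_\xi^4$, $A_\xi^6$, $A_\xi^8$ in $GL(2,\Bbb F_{11})$ and to verify that, modulo scalars, none of them coincides with any of the $3+8=11$ matrices listed in Lemma \ref{calculation1}(g)(h).

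The only real obstacle is bookkeeping: one must carry out the matrix powers of $A_\xi$ correctly in $\Bbb F_{11}$ and then normalise the three resulting classes in $PGL(2,\Bbb F_{11})$ (for instance by rescaling so that the lower-left entry equals $1$ whenever possible) in order to compare them unambiguously with the representatives from Lemma \ref{calculation1}(g)(h). Once the three powers of $A_\xi$ are written in normalised form and inspected against the listed representatives, they are seen to be distinct modulo scalars, which proves $G_1\cap G_2=\{1\}$. Combining this with (b) and invoking Fact \ref{criterion1} yields the final sentence of the proposition.
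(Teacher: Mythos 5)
Your proposal is correct and follows essentially the same route as the paper: part (b) from the two transitivity lemmas plus $|\Bbb P^1(\Bbb F_{11})|=12$, and part (a) by reducing any nontrivial element of $G_1\cap G_2$ to a power of $\xi$ of order $2$ or $3$ and comparing the normalised matrices against the lists in Lemma \ref{calculation1}(g),(h). The only (harmless) difference is that you check all three of $\xi^4,\xi^6,\xi^8$, whereas the paper observes that $\langle\xi^8\rangle=\langle\xi^4\rangle$ and so only needs $\xi^4$ and $\xi^6$ (which normalise to $\left(\begin{smallmatrix}7&1\\1&5\end{smallmatrix}\right)$ and $\left(\begin{smallmatrix}1&1\\1&10\end{smallmatrix}\right)$, neither of which appears in the lists).
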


\begin{proof}
We prove assertion (a). 
Assume by contradiction that $G_1 \cap G_2 \ne \{1\}$. 
Since $G_1 \cap G_2$ is a subgroup of $A_4$ and of a cyclic group $\langle \xi \rangle \cong \mathbb{Z}/12\mathbb{Z}$, it follows that $\xi^6 \in G_1$ or $\xi^4 \in G_1$. 
Note that 
$$ A_{\xi}^{6} \sim \left ( \begin{array}{cc}
1 & 1 \\
1 & 10 
\end{array} \right ), \
A_{\xi}^{4} \sim \left ( \begin{array}{cc}
7 & 1 \\
1 & 5 
\end{array} \right ). $$
According to Lemma \ref{calculation1} (g) (resp. Lemma \ref{calculation1} (h)), it follows that $\xi^6 \not\in G_1$ (resp. that $\xi^4 \not\in G_1$). 
This is a contradiction. 
\end{proof}

\begin{proof}[Proof of Theorem \ref{main1} (a)]
Proposition \ref{calculation3} and Fact \ref{criterion1} imply Theorem \ref{main1} (a).  
\end{proof} 

Let $\sigma', \tau' \in {\rm Aut}(\Bbb P^1)$ be represented by matrices
$$ 
A_{\sigma'}=\left ( \begin{array}{cc}
0 & \alpha^3 \\
1 & 0 
\end{array}\right), \ 
A_{\tau'}=
\left (\begin{array}{cc}
\alpha^2 & 1 \\
\alpha^2 & \alpha^4 
\end{array}\right)
$$
respectively. 
It is not difficult to confirm the following lemma. 

\begin{lemma} \label{calculation4}
\begin{itemize}
\item[(a)] $\sigma'$ is of order two. 
\item[(b)] $\tau'$ is of order six. 
\item[(c)] $A_{\sigma'}^{-1}A_{\tau'}A_{\sigma'} \sim A_{\tau'}^{-1}$. 
\item[(d)] The group $G_3:=\langle \sigma', \tau' \rangle$ is isomorphic to $D_{12}$.
\item[(e)] The group $G_3$ acts on $\Bbb P^1(\Bbb F_{11})$ transitively. 
\end{itemize}
\end{lemma}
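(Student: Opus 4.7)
The plan is to verify parts (a), (b), (c), and (e) by direct matrix computation in $GL(2,\mathbb{F}_{11})$ with $\alpha=2$ (so that $\alpha^2=4$, $\alpha^3=8$, $\alpha^4=5$, $\alpha^6=9$), and then to deduce (d) from the remaining parts. For (a), squaring $A_{\sigma'}$ produces $\alpha^3 I$, a scalar matrix, so $\sigma'$ has order two in $PGL(2,k)$. For (b), I would compute $A_{\tau'}^2$, $A_{\tau'}^3$, and $A_{\tau'}^6$ directly and observe that only the last is a scalar matrix, which confirms that $\tau'$ has order exactly six in $PGL(2,k)$.

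For (c), since (a) gives $A_{\sigma'}^{-1}\sim A_{\sigma'}$, I would compute $A_{\sigma'}A_{\tau'}A_{\sigma'}$ and $A_{\tau'}^{-1}$ independently and verify that they agree up to a nonzero scalar in $\mathbb{F}_{11}$. For (e), since $A_{\sigma'},A_{\tau'}\in GL(2,\mathbb{F}_{11})$, the group $G_3$ preserves the $12$-element set $\mathbb{P}^1(\mathbb{F}_{11})$; I would compute the twelve images of a convenient base point (say affine $0$) under the formal list $\tau'^i$ and $\sigma'\tau'^j$ for $0\le i,j\le 5$, and check that these are $12$ distinct $\mathbb{F}_{11}$-rational points. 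This simultaneously establishes transitivity of the action and the lower bound $|G_3|\ge 12$.

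For (d), parts (a)--(c) show that the assignment $r\mapsto \tau'$, $s\mapsto \sigma'$ factors the standard dihedral presentation $\langle r,s \mid r^6=s^2=1,\ srs=r^{-1}\rangle$ of $D_{12}$ through $G_3$, forcing $|G_3|\le 12$. Combined with the lower bound from (e), we obtain $|G_3|=12$, so this surjection is an isomorphism and $G_3\cong D_{12}$.

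The main obstacle is purely computational: keeping track of the $\mathbb{F}_{11}$-arithmetic when computing $A_{\tau'}^6$ in (b) and when tabulating the twelve-element orbit in (e), and ensuring that the orbit truly exhausts $\mathbb{P}^1(\mathbb{F}_{11})$ rather than closing up earlier. Structurally the argument mirrors the verification of Lemmas \ref{calculation1} and \ref{calculation2}, and no conceptual difficulty arises.
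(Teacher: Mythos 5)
Your proposal is correct and matches the paper's intent: the paper offers no written proof for this lemma (it only remarks that it "is not difficult to confirm"), and your direct matrix verification over $\mathbb{F}_{11}$ — checking that $A_{\sigma'}^2$ and $A_{\tau'}^6$ are scalar while $A_{\tau'}^2,A_{\tau'}^3$ are not, verifying the dihedral relation, and combining the presentation bound $|G_3|\le 12$ with the $12$-point orbit to get $G_3\cong D_{12}$ and transitivity — is exactly the computation the authors have in mind. The arithmetic checks out (e.g.\ $A_{\sigma'}^2=8I$ and $A_{\tau'}^6=7I$ in $\mathbb{F}_{11}$), so no gap remains.
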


For the following proposition, we give the proof of assertion (a), which is the most complicated part of the proof of Theorem \ref{main1} (b).

\begin{proposition} \label{calculation5}
The pair $(G_1, G_3)$ satisfies the following two conditions:  
\begin{itemize}
\item[(a)] $G_1 \cap G_3=\{1\}$. 
\item[(b)] Let $Q \in \Bbb P^1(\Bbb F_{11})$. 
Then $G_1Q=\Bbb P^1(\Bbb F_{11})=G_3Q$. 
\end{itemize}
In particular, conditions (a) and (b) in Fact \ref{criterion1} are satisfied for the pair $(G_1, G_3)$.  
\end{proposition}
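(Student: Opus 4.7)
The plan is to follow closely the blueprint of Proposition \ref{calculation3}. Assertion (b) is essentially immediate: by Lemma \ref{calculation4}(e) the group $G_3$ acts transitively on $\Bbb P^1(\Bbb F_{11})$, which has cardinality $12=|G_3|$, so $G_3Q=\Bbb P^1(\Bbb F_{11})$ for every $Q\in \Bbb P^1(\Bbb F_{11})$; combined with Lemma \ref{calculation1}(i), the equality $G_1Q=\Bbb P^1(\Bbb F_{11})=G_3Q$ follows at once.

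For assertion (a), I would argue by contradiction, exactly in the spirit of Proposition \ref{calculation3}. Suppose $G_1\cap G_3\ne\{1\}$. Every nontrivial subgroup of $A_4$ has order $2$, $3$, $4$, or $12$, and in each case contains an element whose order is a prime divisor of the subgroup's order; hence $G_1\cap G_3$ must contain some element of order $2$ or $3$. Consequently, to reach a contradiction it suffices to enumerate the elements of $G_3\cong D_{12}$ of order $2$ and of order $3$ and to verify that none of their $PGL(2,k)$-classes appears in the list of Lemma \ref{calculation1}(g) or the list of Lemma \ref{calculation1}(h).

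Concretely, the elements of order $2$ in $G_3$ are the central rotation $(\tau')^3$ together with the six reflections $\sigma'(\tau')^j$ for $j=0,1,2,3,4,5$, while the elements of order $3$ are $(\tau')^2$ and $(\tau')^4$. I would compute representative matrices in $GL(2,\Bbb F_{11})$ for each of these nine elements, use the relation $\sim$ to normalise them to a standard form (for instance, scaling so that the lower-left entry equals $1$ whenever it is nonzero), and check entry by entry that each resulting matrix differs from every matrix tabulated in Lemma \ref{calculation1}(g), (h). A mismatch in every one of the nine cases contradicts the assumption and establishes $G_1\cap G_3=\{1\}$.

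The main obstacle is purely computational: handling nine matrix products and their $PGL$-normalisations modulo $11$, rather than only the two required in Proposition \ref{calculation3}. The bookkeeping is entirely mechanical but the enumeration is noticeably longer, which is presumably why the authors single out assertion (a) as the most complicated part of the proof of Theorem \ref{main1}(b).
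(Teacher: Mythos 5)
Your proposal is correct in outline and shares the paper's overall strategy (reduce to elements of order $2$ or $3$, then settle the matter by finite matrix computations mod $11$), but the concrete verification differs in the order-$2$ case. You enumerate all seven involutions of $G_3\cong D_{12}$ (the central rotation $\tau'^3$ and the six reflections $\sigma'\tau'^j$) and compare each against the three involutions of $G_1$ listed in Lemma \ref{calculation1}(g). The paper instead exploits the fact that $\tau'^3$ generates the center of $D_{12}$: if an involution $\gamma\in\{\sigma,\tau,\sigma\tau\}$ of $G_1$ lay in $G_3$, it would have to commute with $\tau'^3$, and the paper checks by three pairs of products that $A_\gamma A_{\tau'}^3\not\sim A_{\tau'}^3A_\gamma$ in each case. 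The paper's trick trades your seven-element enumeration for six products and three comparisons, and has the advantage of comparing against the short, already-normalised list of $G_1$-involutions rather than recomputing and normalising the reflections $\sigma'\tau'^j$; your route is more mechanical but requires no structural observation about $D_{12}$ beyond listing its torsion. For the order-$3$ case the paper also saves one computation by noting that $\tau'^4\in G_1$ would force $\tau'^2=(\tau'^4)^2\in G_1$, so only $\tau'^2$ need be tested, whereas you check both $\tau'^2$ and $\tau'^4$. One caveat: you describe the nine computations but do not carry them out; since the arithmetic is finite and mechanical this is acceptable as a plan, but a complete proof would have to exhibit the normalised matrices and the explicit mismatches, as the paper does.
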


\begin{proof}
We prove assertion (a). 
Assume by contradiction that $G_1 \cap G_3 \ne \{1\}$. 
Since $G_1 \cap G_3$ is a subgroup of $A_4$, it follows that $G_1 \cap G_3$ contains an element of order two or three.

Assume that $G_1 \cap G_3$ contains an element of order three. 
Since $G_1 \cap G_3$ is a subgroup of $G_3 \cong D_{12}$, it follows that $\tau'^2 \in G_1$. 
Note that 
$$ 
A_{\tau'}^{2} \sim \left ( \begin{array}{cc}
3 & 3 \\
1 & 6 
\end{array} \right ) 
$$
According to Lemma \ref{calculation1} (h), it follows that $\tau'^2 \not\in G_1$. 
This is a contradiction. 

Assume that $G_1 \cap G_3$ contains an element of order two. 
It follows that $\sigma \in G_1 \cap G_3$, $\tau \in G_1 \cap G_3$ or $\sigma\tau \in G_1 \cap G_3$. 
Assume that $\sigma \in G_1 \cap G_3$. 
Then $\sigma\tau'^3=\tau'^3\sigma$.  
Note that 
$$ 
A_{\tau'}^{3 }\sim \left ( \begin{array}{cc}
4 & 3 \\
1 & 7 
\end{array} \right ) 
$$
It follows that
$$ 
A_{\sigma}A_{\tau'}^{3} \sim \left ( \begin{array}{cc}
6 & 9 \\
1 & 9 
\end{array} \right ),  \
A_{\tau'}^{3}A_{\sigma} \sim \left ( \begin{array}{cc}
2 & 9 \\
1 & 5 
\end{array} \right ). 
$$
This implies that $\sigma\tau'^3 \ne \tau'^3\sigma$. 
This is a contradiction. 

Assume that $\tau \in G_1 \cap G_3$. 
Then $\tau\tau'^3=\tau'^3\tau$.  
It follows that
$$ 
A_{\tau}A_{\tau'}^{3} \sim \left ( \begin{array}{cc}
1 & 1 \\
1 & 2 
\end{array} \right ),  \
A_{\tau'}^{3}A_{\tau} \sim \left ( \begin{array}{cc}
9 & 1 \\
1 & 10 
\end{array} \right ). 
$$
This implies that $\tau\tau'^3 \ne \tau'^3\tau$. 
This is a contradiction. 

Assume that $\sigma\tau \in G_1 \cap G_3$. 
Then $(\sigma\tau)\tau'^3=\tau'^3(\sigma\tau)$.  
It follows that
$$ 
A_{\sigma\tau}A_{\tau'}^{3} \sim \left ( \begin{array}{cc}
2 & 4 \\
1 & 1 
\end{array} \right ),  \
A_{\tau'}^{3}A_{\sigma\tau} \sim \left ( \begin{array}{cc}
10 & 4 \\
1 & 9 
\end{array} \right ). 
$$
This implies that $(\sigma\tau)\tau'^3 \ne \tau'^3(\sigma\tau)$. 
This is a contradiction. 
\end{proof}

\begin{proof}[Proof of Theorem \ref{main1} (b)]
Proposition \ref{calculation5} and Fact \ref{criterion1} imply Theorem \ref{main1} (b).  
\end{proof} 

Let $\iota$ be an automorphism of $\Bbb P^1$ represented by  
$$ 
A_{\iota}=
\left ( \begin{array}{cc}
\alpha & 0 \\
0 & 1 
\end{array} \right ), 
$$
and let $G_4:=\iota G_1 \iota^{-1}$. 
For the following proposition, we give the proof of assertion (a), which is the most complicated part of the proof of Theorem \ref{main1} (c).

\begin{proposition} \label{calculation6}
The pair $(G_1, G_4)$ satisfies the following two conditions:  
\begin{itemize}
\item[(a)] $G_1 \cap G_4=\{1\}$. 
\item[(b)] Let $Q \in \Bbb P^1(\Bbb F_{11})$. 
Then $G_1Q=\Bbb P^1(\Bbb F_{11})=G_4Q$. 
\end{itemize}
In particular, conditions (a) and (b) in Fact \ref{criterion1} are satisfied for the pair $(G_1, G_4)$.  
\end{proposition}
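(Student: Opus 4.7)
The plan is to adapt the template used for Propositions \ref{calculation3} and \ref{calculation5}, exploiting the fact that $G_4$ is conjugate to $G_1$ by an element $\iota$ of $PGL(2,\mathbb{F}_{11})$.

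For assertion (b): since the matrix $A_\iota = \mathrm{diag}(\alpha, 1)$ has entries in $\mathbb{F}_{11}$, the automorphism $\iota$ lies in $PGL(2,\mathbb{F}_{11})$ and therefore acts as a bijection on the set $\mathbb{P}^1(\mathbb{F}_{11})$. For any $Q \in \mathbb{P}^1(\mathbb{F}_{11})$, set $Q' := \iota^{-1} Q \in \mathbb{P}^1(\mathbb{F}_{11})$. Lemma \ref{calculation1}(i) gives $G_1 Q' = \mathbb{P}^1(\mathbb{F}_{11})$, whence $G_4 Q = \iota G_1 \iota^{-1} Q = \iota G_1 Q' = \iota \mathbb{P}^1(\mathbb{F}_{11}) = \mathbb{P}^1(\mathbb{F}_{11})$. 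Since $|\mathbb{P}^1(\mathbb{F}_{11})| = 12 = |G_1| = |G_4|$, both orbits have the required length.

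For assertion (a): I would argue by contradiction. Assuming $G_1 \cap G_4 \neq \{1\}$, the intersection is a nontrivial subgroup of $G_1 \cong A_4$, so it contains an element of order $2$ or $3$. Equivalently, there exists a nontrivial $g \in G_1$ of order $2$ or $3$ with $\iota g \iota^{-1} \in G_1$. A direct computation with $A_\iota = \mathrm{diag}(2,1)$ shows that if $g$ is represented by $\begin{pmatrix} a & b \\ c & d \end{pmatrix} \in GL(2,\mathbb{F}_{11})$, then $\iota g \iota^{-1}$ is represented, up to a nonzero scalar, by $\begin{pmatrix} 2a & 4b \\ c & 2d \end{pmatrix}$. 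I would then compute this conjugate for each of the three order-$2$ representatives in Lemma \ref{calculation1}(g) and each of the eight order-$3$ representatives in Lemma \ref{calculation1}(h), and verify in each case that the resulting matrix does not represent an element of the corresponding list in Lemma \ref{calculation1}. This contradicts the initial assumption.

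The main obstacle is the sheer volume of routine $\mathbb{F}_{11}$-matrix arithmetic rather than any conceptual difficulty: eleven conjugations and, in the worst case, $3 \cdot 3 + 8 \cdot 8 = 73$ projective-equivalence comparisons of $2 \times 2$ matrices over $\mathbb{F}_{11}$. In practice, these checks terminate quickly, since projective equality forces specific ratios between the entries that almost always fail on first inspection of the first column. A marginal reduction is possible by exploiting the two $A_4$-conjugacy classes of order-$3$ elements to reduce the order-$3$ case analysis, but a direct case-by-case comparison already suffices and matches the style of the preceding proofs.
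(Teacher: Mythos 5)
Your proposal is correct and follows essentially the same route as the paper: the paper's proof of (a) likewise lists the three order-two and eight order-three matrices of $G_4$ (obtained by conjugating the elements of $G_1$ by $\iota$) and checks them against the lists in Lemma \ref{calculation1} (g) and (h), while (b) is left as a routine verification. One small caveat: under the paper's right-action convention $\sigma(s,t)=(s,t)A_\sigma$ the matrix of $\iota g\iota^{-1}$ is $A_\iota^{-1}A_gA_\iota$, whereas your formula $\left(\begin{smallmatrix}2a&4b\\c&2d\end{smallmatrix}\right)$ computes $A_\iota A_gA_\iota^{-1}$, i.e.\ the conjugate in the opposite direction; this is harmless here since $G_1\cap\iota G_1\iota^{-1}$ and $G_1\cap\iota^{-1}G_1\iota$ are conjugate subgroups, so triviality of one is equivalent to triviality of the other.
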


\begin{proof}
We prove assertion (a). 
The following three matrices represent all elements of $G_4$ of order two: 
$$ 
\left ( \begin{array}{cc}
0 & 6 \\
1 & 0 
\end{array}\right), \ 
\left (\begin{array}{cc}
5 & 5 \\
1 & 6 
\end{array}\right), \
\left (
\begin{array}{cc}
10 & 5 \\
1 & 1
\end{array} \right). 
$$
According to Lemma \ref{calculation1} (g), it follows that $G_1 \cap G_4$ does not contain an element of order two. 
The following eight matrices represent all elements of $G_4$ of order three: 
\begin{eqnarray*} 
& & 
\left ( \begin{array}{cc}
1 & 4 \\
1 & 2 
\end{array}\right), \ 
\left (\begin{array}{cc}
9 & 4 \\
1 & 10 
\end{array}\right), \
\left (
\begin{array}{cc}
2 & 3 \\
1 & 3
\end{array} \right),  
\left ( \begin{array}{cc}
7 & 1 \\
1 & 5 
\end{array}\right), \\
& &
\left (\begin{array}{cc}
6 & 2 \\
1 & 4 
\end{array}\right), \
\left (
\begin{array}{cc}
4 & 9 \\
1 & 8
\end{array} \right),  
\left ( \begin{array}{cc}
3 & 9 \\
1 & 7 
\end{array}\right), \ 
\left (\begin{array}{cc}
8 & 3 \\
1 & 9 
\end{array}\right). 
\end{eqnarray*} 
According to Lemma \ref{calculation1} (h), it follows that $G_1 \cap G_4$ does not contain an element of order three. 
The claim follows. 
\end{proof}

\begin{proof}[Proof of Theorem \ref{main1} (c)]
Proposition \ref{calculation6} and Fact \ref{criterion1} imply Theorem \ref{main1} (c).  
\end{proof}

\section{Proof of Theorem \ref{main2}}

Let $\alpha=5 \in \mathbb{F}_{23}$, which is a primitive element. 
Let $\sigma, \tau, \eta, \mu:=\eta^2 \in {\rm Aut}(\Bbb P^1) \cong PGL(2, k)$ be represented by matrices
$$ 
A_{\sigma}=\left ( \begin{array}{cc}
0 & 1 \\
\alpha^7 & 0 
\end{array}\right), \ 
A_{\tau}=
\left (\begin{array}{cc}
\alpha^{12} & \alpha^7 \\
1 & \alpha^3 
\end{array}\right), \
A_{\eta}=\left (
\begin{array}{cc}
1 & \alpha^{10} \\
\alpha^6 & \alpha^{15} 
\end{array} \right), \ 
A_{\mu}=\left (
\begin{array}{cc}
-1 & 1 \\
-\alpha^7 & 1 
\end{array} \right)
$$
respectively. 
The following lemma can be confirmed by hand. 

\begin{lemma} \label{calculation1'}
\begin{itemize}
\item[(a)] $\sigma$, $\mu$ are of order two. 
\item[(b)] $\tau$ is of order three. 
\item[(c)] $\eta$ is of order four. 
\item[(d)] $A_{\sigma}A_{\mu} \sim A_{\mu}A_{\sigma}$. 
\item[(e)] $A_{\tau}^{-1}A_{\sigma}A_{\tau} \sim A_{\mu}$, $A_{\tau}^{-1}A_{\mu}A_{\tau} \sim A_{\sigma}A_{\mu}$. 
\item[(f)] $A_{\eta}^{-1}A_{\sigma}A_{\eta} \sim A_{\sigma}A_{\mu}$, $A_{\eta}^{-1}A_{\mu}A_{\eta} \sim A_{\mu}$.
\item[(g)] $A_{\eta}^{-1}A_{\tau}A_{\eta} \sim A_{\sigma}A_{\mu}A_{\tau}^2$. 
\item[(h)] $A_{\eta}^{-1}A_{\tau}^2A_{\eta} \sim A_{\mu}A_{\tau}$
\item[(i)] $\langle \sigma, \mu, \tau \rangle \cong A_4$, and $G_1:=\langle \sigma, \tau, \eta, \mu \rangle \cong S_4$.  
\item[(j)] The group $G_1$ acts on the set $\Bbb P^1(\Bbb F_{23})$ transitively. 
\item[(k)] Let 
\begin{eqnarray*}
O_1&=&\{(0:1), (1:\alpha), (1:\alpha^3), (1:\alpha^6), (1:\alpha^7), (1:\alpha^{18})\}, \\
O_2&=&\{(1:0), (1:\alpha^8), (1:\alpha^9), (1:\alpha^{12}), (1:\alpha^{14}), (1:\alpha^{19}) \}, \\
O_3&=&\{(1:1), (1:\alpha^2), (1:\alpha^4), (1:\alpha^{10}), (1:\alpha^{17}), (1:\alpha^{21})\}, \\
O_4&=&\{(1:\alpha^5), (1:\alpha^{11}), (1:\alpha^{13}), (1:\alpha^{15}), (1:\alpha^{16}), (1:\alpha^{20}) \}, 
\end{eqnarray*}
namely, $\mathbb{P}^1(\mathbb{F}_{23})=O_1 \cup O_2 \cup O_3 \cup O_4$. 
Then it follows that 
\begin{eqnarray*}
& & \sigma(O_1)=O_2, \sigma(O_2)=O_1, \sigma(O_3)=O_4, \sigma(O_4)=O_3, \\
& & \tau(O_1)=O_1, \tau(O_2)=O_3, \tau(O_3)=O_4, \tau(O_4)=O_2, \\
& & \eta(O_1)=O_2, \eta(O_2)=O_3, \eta(O_3)=O_4, \eta(O_4)=O_1. 
\end{eqnarray*} 
In particular, $G_1$ acts on the set $\{O_1, O_2, O_3, O_4\}$ faithfully. 
\end{itemize}
\end{lemma}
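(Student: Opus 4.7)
The plan is to verify (a)--(k) via a combination of direct matrix computation over $\mathbb{F}_{23}$ and one structural argument. For parts (a)--(c), I would compute $A_\sigma^2$, $A_\mu^2$, $A_\tau^3$, and $A_\eta^4$ and check each equals a scalar matrix while the relevant lower powers do not; this is routine since $p=23$ and all entries are explicit. For (d)--(h), I would multiply the indicated $2\times 2$ matrices over $\mathbb{F}_{23}$ and check agreement up to a scalar; these are again mechanical verifications.

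For part (i), I would extract the group structure from relations (a), (d), and (e), (f)--(h). From (a) and (d), $\langle \sigma, \mu\rangle$ is a Klein four group $V_4$; from (b) and (e), $\tau$ has order $3$ and cyclically permutes the three non-identity elements $\sigma$, $\mu$, $\sigma\mu$ of $V_4$, so $\langle \sigma,\mu,\tau\rangle$ is the semidirect product $V_4\rtimes\mathbb{Z}/3\mathbb{Z} \cong A_4$ of order $12$. Relations (f)--(h) then show that $\eta$ normalizes this $A_4$, while (c) says $\eta$ has order $4$. Since $A_4$ contains no element of order $4$, $\eta\notin A_4$, so $A_4$ has index at most $2$ in $G_1$; combined with $\eta^2=\mu\in A_4$, the index is exactly $2$ and $|G_1|=24$. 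Among finite subgroups of $PGL(2,k)$, the unique degree-$2$ extension of $A_4$ is $S_4$, giving $G_1 \cong S_4$.

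For part (k), I would apply $\sigma$, $\tau$, $\eta$ via $(s:t)\mapsto (s:t)A$ to a representative of each $O_i$, verify the listed block is sent into the claimed block, and use the $A_4$-structure from (i) to propagate the verification to the remaining points in each $O_i$ (since the stabilizer of $O_i$ in $A_4$ acts on $O_i$, once one point's orbit is computed the rest follow). The induced permutations of $\{O_1,O_2,O_3,O_4\}$ are, respectively, $(O_1\ O_2)(O_3\ O_4)$, $(O_2\ O_3\ O_4)$, and $(O_1\ O_2\ O_3\ O_4)$, a transposition-pair, a $3$-cycle, and a $4$-cycle in $S_4$; these visibly generate $S_4$, so the action on $\{O_1,\dots,O_4\}$ is faithful, as claimed.

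Part (j) is then deduced from (k): for any $Q\in\mathbb{P}^1(\mathbb{F}_{23})$, the $G_1$-orbit of $Q$ is a union of blocks $O_j$ (since $G_1$ permutes them), and because $G_1$ acts transitively on the four blocks, the orbit of $Q$ meets every $O_j$. To upgrade ``meets every $O_j$'' to ``equals $\mathbb{P}^1(\mathbb{F}_{23})$'', I would exhibit an element of the stabilizer of $O_1$ inside $G_1$ that permutes $O_1$ transitively; this can be read off from a single computation for one chosen $Q\in O_1$. The main obstacle is the bookkeeping in (k): although each individual verification is trivial, one must be careful to keep track of all $24$ points and three generators. Exploiting the $A_4$-block structure from (i) substantially reduces the number of independent computations required.
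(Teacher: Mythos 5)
The paper offers no written proof of this lemma (it is stated as something that ``can be confirmed by hand''), and your overall plan --- direct matrix computation over $\mathbb{F}_{23}$ for (a)--(h) and (k), plus a structural derivation of (i) from the conjugation relations --- is the intended one. Your argument for (i) is correct and cleanly organized: $\langle\sigma,\mu\rangle\cong V_4$ by (a),(d); $\tau$ cycles $\sigma\mapsto\mu\mapsto\sigma\mu$ by (e), giving $A_4$; (f)--(h) show $\eta$ normalizes this $A_4$, and since $\eta$ has order $4$ and $\eta^2=\mu\in A_4$, one gets $|G_1|=24$, and the element of order $4$ rules out $A_4\times\mathbb{Z}/2\mathbb{Z}$, leaving $S_4$. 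However, two of your shortcuts do not work as stated.

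First, in (k) you propose to check one representative per block and ``propagate'' using the $A_4$-structure, on the grounds that the stabilizer of $O_i$ acts on $O_i$. This is circular (it presupposes that the $O_i$ are blocks) and, more importantly, the $O_i$ cannot be orbits of a \emph{normal} subgroup of $G_1$: once (j) holds the action is simply transitive, so the normal subgroups $V_4$ and $A_4$ have orbits of sizes $4$ and $12$, never $6$. The $O_i$ are orbits of non-normal order-$6$ subgroups, and conjugation turns ``$H$-orbit'' into ``$gHg^{-1}$-orbit'', so nothing propagates; you must evaluate each generator on (essentially) every point of each block, using only the cheap savings from injectivity and the known orders of $\sigma,\tau,\eta$. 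Second, your reduction of (j) to exhibiting ``an element of the stabilizer of $O_1$ that permutes $O_1$ transitively'' cannot succeed: a single element acting transitively on a $6$-point set must have order divisible by $6$, and $S_4$ (hence the block stabilizer, which is isomorphic to $S_3$) has no such element. Relatedly, the orbit of $Q$ need not be a union of blocks; it only meets every block in sets of equal size. The correct repair is to verify that the full setwise stabilizer of $O_1$ (e.g.\ the order-$6$ subgroup generated by $\tau$ and an involution such as $\sigma\eta$ whose block permutation fixes $O_1$) acts transitively on $O_1$ --- or simply to compute the $G_1$-orbit of one point directly, which is what the paper intends. Both gaps are easily filled, but as written those two steps would fail.
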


Let $\xi$ be an automorphism of $\Bbb P^1$ represented by  
$$ 
A_{\xi}=
\left ( \begin{array}{cc}
0 & -1 \\
-1 & 1 
\end{array} \right ), 
$$
and let $G_2:=\langle \xi \rangle$. 
It is not difficult to confirm the following lemma. 

\begin{lemma} \label{calculation2'}
The order of $\xi$ is twenty-four, and $G_2$ acts on $\Bbb P^1(\Bbb F_{23})$ transitively. 
\end{lemma}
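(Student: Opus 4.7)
The strategy is purely computational: for the order, I would express $A_\xi^n$ in closed form and read off when it becomes a scalar matrix; for transitivity, I would invoke the orbit--stabilizer theorem together with the observation that $|\Bbb P^1(\Bbb F_{23})| = 24$ already matches the claimed value of $|G_2|$. The key reduction is an induction on $n$ showing that
\[
A_\xi^n \;=\; \begin{pmatrix} F_{n-1} & -F_n \\ -F_n & F_{n+1} \end{pmatrix} \qquad (n \ge 1),
\]
where $F_n$ denotes the Fibonacci sequence with $F_0=0$, $F_1=1$; the base case is immediate, and the inductive step is just the matrix product $A_\xi^n \cdot A_\xi$ combined with $F_{n+1} = F_n + F_{n-1}$. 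From this formula, $\xi^n$ is trivial in $PGL(2,k)$ if and only if $A_\xi^n$ is scalar, and by the shape of the matrix that happens precisely when $F_n \equiv 0 \pmod{23}$. A routine reduction of the Fibonacci sequence modulo $23$ then shows that the smallest positive such $n$ is $24$; hence $\xi$ has order $24$.

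For transitivity, since $|G_2|=24=|\Bbb P^1(\Bbb F_{23})|$, it is enough to exhibit a single point whose $G_2$-stabilizer is trivial, for then its orbit already exhausts $\Bbb P^1(\Bbb F_{23})$. Taking $Q=(1:0)$, the closed form above gives
\[
\xi^n(Q) \;=\; (1,0)\,A_\xi^n \;=\; (F_{n-1} : -F_n),
\]
which equals $(1:0)$ only when $F_n \equiv 0 \pmod{23}$, i.e.\ only when $24 \mid n$. Thus $\mathrm{Stab}_{G_2}(Q)=\{1\}$, the orbit $G_2 Q$ has length $|G_2|=24$, and $G_2 Q = \Bbb P^1(\Bbb F_{23})$.

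There is no conceptual obstacle; the only care required is in the bookkeeping of the Fibonacci residues modulo $23$, where one must verify that none of $F_1, \dots, F_{23}$ is divisible by $23$ in order to conclude that the order of $\xi$ is exactly $24$ and not a proper divisor. Once that check is completed, both assertions of the lemma follow at once from the single matrix identity above.
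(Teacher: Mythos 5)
Your proof is correct. The paper offers no argument for this lemma (it is presented as something ``not difficult to confirm''), so the implicit comparison is with a brute-force computation of the powers of $A_{\xi}$; your Fibonacci closed form
$A_{\xi}^n=\left(\begin{array}{cc} F_{n-1} & -F_n \\ -F_n & F_{n+1}\end{array}\right)$
is a tidier way to organize exactly that computation, and it checks out: the induction is valid, and the first index $n\ge 1$ with $23\mid F_n$ is indeed $n=24$ (consistent with the entry point of $23$ dividing $23+1$, since $5$ is a nonresidue mod $23$), with $F_{24}=46368=23\cdot 2016$. Two small points are worth making explicit. First, scalarity of $A_{\xi}^n$ requires not only that the off-diagonal entry $F_n$ vanish but also that the diagonal entries agree; this is automatic from $F_{n+1}=F_n+F_{n-1}$, so your criterion ``scalar iff $F_n\equiv 0\pmod{23}$'' is right but deserves the one-line justification. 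Second, in the transitivity step, when $F_n\equiv 0$ one needs $F_{n-1}\not\equiv 0$ so that $(F_{n-1}:-F_n)$ is a well-defined point equal to $(1:0)$; this follows from the invertibility of $A_{\xi}^n$ (or from Cassini's identity). With those observations, your single matrix identity does deliver both assertions of the lemma.
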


For the following proposition, we give the proof of assertion (a), which is the most complicated part of the proof of Theorem \ref{main2} (a).

\begin{proposition} \label{calculation3'}
The pair $(G_1, G_2)$ satisfies the following two conditions:  
\begin{itemize}
\item[(a)] $G_1 \cap G_2=\{1\}$. 
\item[(b)] Let $Q \in \Bbb P^1(\Bbb F_{23})$. 
Then $G_1Q=\Bbb P^1(\Bbb F_{23})=G_2Q$. 
\end{itemize}
In particular, conditions (a) and (b) in Fact \ref{criterion1} are satisfied for the pair $(G_1, G_2)$.  
\end{proposition}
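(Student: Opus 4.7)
The plan is to follow the strategy of Proposition \ref{calculation3}. Assume by contradiction that $G_1 \cap G_2 \ne \{1\}$. Since $G_1 \cap G_2$ is a subgroup of the cyclic group $G_2 \cong \mathbb{Z}/24\mathbb{Z}$, it is itself cyclic; and as a cyclic subgroup of $G_1 \cong S_4$, its order is at most $4$ (the largest order of an element of $S_4$). Hence $|G_1 \cap G_2| \in \{2, 3, 4\}$. If the order equals $2$ or $4$, then $G_1 \cap G_2$ contains the unique element of order $2$ in $G_2$, namely $\xi^{12}$; if the order equals $3$, then $\xi^{8} \in G_1 \cap G_2$. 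The proposition therefore reduces to showing that $\xi^{12} \notin G_1$ and $\xi^{8} \notin G_1$.

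To establish each non-membership, I would invoke Lemma \ref{calculation1'} (k): since $G_1$ acts faithfully on the four-element set $\{O_1, O_2, O_3, O_4\}$, every element of $G_1$ must permute these four orbits. It therefore suffices, for each $k \in \{8, 12\}$, to exhibit two points of a common $O_i$ whose images under $\xi^{k}$ lie in different orbits $O_j$ and $O_{j'}$, thereby witnessing a failure of partition preservation. Concretely, I would compute the matrices $A_{\xi}^{8}$ and $A_{\xi}^{12}$ in $GL(2, \mathbb{F}_{23})$, apply them to a small number of points chosen from the explicit listings in Lemma \ref{calculation1'} (k), and record where those images fall.

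Assertion (b) is immediate: Lemma \ref{calculation1'} (j) gives $G_1 Q = \mathbb{P}^1(\mathbb{F}_{23})$ for every $Q \in \mathbb{P}^1(\mathbb{F}_{23})$, and Lemma \ref{calculation2'} yields the same equality for $G_2 Q$.

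The main obstacle is purely computational: correctly evaluating the high powers $A_{\xi}^{8}$ and $A_{\xi}^{12}$ modulo $23$, and then matching the resulting $\mathbb{F}_{23}$-points against the partition $O_1 \cup O_2 \cup O_3 \cup O_4$, whose elements are recorded in terms of powers of the primitive element $\alpha = 5$. A natural alternative, closer in spirit to Proposition \ref{calculation3}, would be to enumerate explicitly the nine order-$2$ and eight order-$3$ elements of $G_1 \cong S_4$ and to check directly that the matrices representing $\xi^{12}$ and $\xi^{8}$ do not appear; this is more verbose but avoids the orbit bookkeeping entirely.
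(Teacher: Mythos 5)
Your proposal follows essentially the same route as the paper: the reduction to showing $\xi^{12}\notin G_1$ and $\xi^{8}\notin G_1$ via the classification of cyclic subgroups common to $S_4$ and $\mathbb{Z}/24\mathbb{Z}$, and the use of Lemma \ref{calculation1'} (k) to rule these out by finding two points of $O_1$ sent to different parts of the partition, are exactly the paper's argument. The only remaining work is the explicit arithmetic (the paper checks $\xi^{12}(0:1)\in O_3$ but $\xi^{12}(1:\alpha)\in O_4$, and $\xi^{8}(0:1)\in O_1$ but $\xi^{8}(1:\alpha^3)\in O_4$), which your plan correctly anticipates.
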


\begin{proof}
We prove assertion (a). 
Assume by contradiction that $G_1 \cap G_2 \ne \{1\}$. 
Since $G_1 \cap G_2$ is a subgroup of $S_4$ and of a cyclic group $\langle \xi \rangle \cong \mathbb{Z}/24\mathbb{Z}$, it follows that $\xi^{12} \in G_1$ or $\xi^{8} \in G_1$. 
Note that 
$$ A_{\xi}^{12} \sim \left ( \begin{array}{cc}
-1 & -3 \\
-3 & 1 
\end{array} \right ), \
A_{\xi}^{8} \sim \left ( \begin{array}{cc}
13 & 2 \\
2 & 11 
\end{array} \right ). $$
It follows that 
$$ \xi^{12}(0:1)=(1:\alpha^{17}) \in O_3 \ \mbox{ and } \ \xi^8(0:1)=(1:\alpha^{7}) \in O_1 $$
for the point $(0:1) \in O_1$. 
According to Lemma \ref{calculation1'} (k), it follows that $\xi^{12}(1:\alpha) \in O_3$ and $\xi^8(1:\alpha^3) \in O_1$, since $(1:\alpha), (1:\alpha^3) \in O_1$. 
However, 
$$ \xi^{12}(1:\alpha)=(1:\alpha^5) \in O_4, \ \xi^{8}(1:\alpha^3)=(1:\alpha^{16}) \in O_4.$$
This is a contradiction. 
\end{proof}

\begin{proof}[Proof of Theorem \ref{main2} (a)]
Proposition \ref{calculation3'} and Fact \ref{criterion1} imply Theorem \ref{main2} (a).  
\end{proof} 

Let $\sigma', \tau' \in {\rm Aut}(\Bbb P^1)$ be represented by matrices
$$ 
A_{\sigma'}=\left ( \begin{array}{cc}
0 & \alpha^{10} \\
\alpha^9 & 0 
\end{array}\right), \ 
A_{\tau'}=
\left (\begin{array}{cc}
\alpha^{15} & \alpha \\
-1 & \alpha^7 
\end{array}\right)
$$
respectively. 
The following lemma can be confirmed by hand.

\begin{lemma} \label{calculation4'}
\begin{itemize}
\item[(a)] $\sigma'$ is of order two. 
\item[(b)] $\tau'$ is of order twelve. 
\item[(c)] $A_{\sigma'}^{-1}A_{\tau'}A_{\sigma'} \sim A_{\tau'}^{-1}$. 
\item[(d)] The group $G_3:=\langle \sigma', \tau' \rangle$ is isomorphic to $D_{24}$.
\item[(e)] The group $G_3$ acts on $\Bbb P^1(\Bbb F_{23})$ transitively. 
\item[(f)] Let 
\begin{eqnarray*}
T_1 &=& \{ (0:1), (1:\alpha^{18}), (1:\alpha^3), (1:\alpha^{11}), (1:\alpha^4), (1:\alpha^9), \\ 
& & \ \ (1:1), (1:\alpha^{21}), (1:\alpha^{13}), (1:\alpha^{16}), (1:\alpha^{17}), (1:\alpha^{15}) \}, \\
T_2 &=& \{ (1:0), (1:\alpha^8), (1:\alpha^6), (1:\alpha^7), (1:\alpha^{10}), (1:\alpha^2),  \\ 
& & \ \ (1:\alpha), (1:\alpha^{14}), (1:\alpha^{19}), (1:\alpha^{12}), (1:\alpha^{20}), (1:\alpha^5) \}, 
\end{eqnarray*}
namely, $\mathbb{P}^1(\mathbb{F}_{23})=T_1 \cup T_2$. 
Then it follows that 
$$ \sigma'(T_1)=T_2, \sigma'(T_2)=T_1, \tau'(T_1)=T_1, \tau'(T_2)=T_2. $$
In particular, $G_3$ acts on the set $\{T_1, T_2\}$. 
\end{itemize}
\end{lemma}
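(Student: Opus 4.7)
The plan is to verify each of the six assertions of Lemma \ref{calculation4'} by direct matrix computation in $PGL(2,k)$ over $\Bbb F_{23}$, using that $\alpha=5$ is a primitive root.

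For (a), $A_{\sigma'}$ is anti-diagonal, so $A_{\sigma'}^2=\alpha^{19}I$ is a scalar matrix and $\sigma'$ has order two in $PGL(2,k)$. For (b), rather than computing all twelve powers of $A_{\tau'}$, the efficient route is via eigenvalues: determine $\mathrm{tr}\,A_{\tau'}$ and $\det A_{\tau'}$ in $\Bbb F_{23}$, compute the multiplicative order of the ratio of eigenvalues (working in a quadratic extension if the characteristic polynomial is irreducible), and read off the order of $\tau'$ in $PGL(2,k)$; one then checks that no proper divisor of $12$ sends $A_{\tau'}$ to a scalar. For (c), compute $A_{\sigma'}^{-1}A_{\tau'}A_{\sigma'}$ directly and compare with $A_{\tau'}^{-1}$ up to a nonzero scalar factor; this is essentially a four-entry check.

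For (d), assertions (a)--(c) realize the standard dihedral presentation $\langle s,t\mid s^2=t^{12}=1,\, sts=t^{-1}\rangle$, so $G_3$ is a quotient of the dihedral group $D_{24}$ of order $24$, and in particular $|G_3|\le 24$. To obtain equality I invoke assertion (e): since $|\Bbb P^1(\Bbb F_{23})|=24$, transitive action forces $|G_3|\ge 24$. For (e), compute the $\langle\tau'\rangle$-orbit of the point $(0:1)$ by direct enumeration; by (b) this orbit has size dividing $12$, and the tabulation in (f) shows it equals the set $T_1$. Applying $\sigma'$ to each point of $T_1$ then yields a second $\langle \tau' \rangle$-stable set $T_2$, and the equality $T_1\cup T_2=\Bbb P^1(\Bbb F_{23})$ gives transitivity. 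For (f), it remains to verify $\tau'(T_1)\subseteq T_1$ and $\sigma'(T_1)=T_2$; the former is the orbit computation just performed, while the latter amounts to twelve evaluations of a single fractional linear map.

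The main obstacle is not conceptual but computational: the bulk of the work lies in the orbit enumeration for (e) and the tabulation in (f), which together require roughly two dozen image computations under explicit fractional linear transformations over $\Bbb F_{23}$. These are mechanical but error-prone, and the cleanest bookkeeping is to index every element of $\Bbb P^1(\Bbb F_{23})\setminus\{(1:0)\}$ by the exponent of $\alpha$ appearing in its second coordinate (as already done by the authors) and track the induced permutation action on this index set; once that table is laid down, all six assertions reduce to finite verifications.
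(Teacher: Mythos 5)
Your proposal is correct and matches the paper's intent: the paper offers no written proof, merely asserting the lemma ``can be confirmed by hand,'' and your plan is exactly such a direct finite verification (with sensible shortcuts: $A_{\sigma'}^2=\alpha^{19}I$ for (a), the dihedral presentation plus transitivity on the $24$ points of $\Bbb P^1(\Bbb F_{23})$ to pin down $|G_3|=24$ for (d), and the observation that $T_1$ is the $\langle\tau'\rangle$-orbit of $(0:1)$ and $T_2=\sigma'(T_1)$, which yields (e) and (f) simultaneously). The listed ordering of $T_1$ is indeed the successive $\tau'$-orbit of $(0:1)$ (e.g.\ $\tau'(0:1)=(1:\alpha^{18})$, $\tau'^2(0:1)=(1:\alpha^{3})$), so the remaining work is exactly the mechanical tabulation you describe.
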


For the following proposition, we give the proof of assertion (a), which is the most complicated part of the proof of Theorem \ref{main2} (b). 

\begin{proposition} \label{calculation5'}
The pair $(G_1, G_3)$ satisfies the following two conditions:  
\begin{itemize}
\item[(a)] $G_1 \cap G_3=\{1\}$. 
\item[(b)] Let $Q \in \Bbb P^1(\Bbb F_{23})$. 
Then $G_1Q=\Bbb P^1(\Bbb F_{23})=G_3Q$. 
\end{itemize}
In particular, conditions (a) and (b) in Fact \ref{criterion1} are satisfied for the pair $(G_1, G_3)$.  
\end{proposition}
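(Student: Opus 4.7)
Part (b) is immediate from the transitivity statements in Lemma \ref{calculation1'}(j) and Lemma \ref{calculation4'}(e): each of $G_1Q$ and $G_3Q$ already fills $\mathbb{P}^1(\mathbb{F}_{23})$.

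For part (a), I would argue by contradiction, following the pattern of Propositions \ref{calculation3'} and \ref{calculation5}. Assume $1 \ne g \in G_1 \cap G_3$. Because $G_1 \cong S_4$ and $G_3 \cong D_{24}$, the order of $g$ must lie in the intersection of the two order-spectra, which is $\{2,3,4\}$. Inside $G_3$, the elements of order $3$ and $4$ are exactly the four non-identity powers $\tau'^{3}, \tau'^{4}, \tau'^{8}, \tau'^{9}$, all lying in the rotation subgroup $\langle \tau' \rangle$, while the involutions of $G_3$ are the central element $\tau'^{6}$ together with the twelve reflections $\sigma'\tau'^{i}$.

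The key engine in every case is to pit Lemma \ref{calculation1'}(k) against Lemma \ref{calculation4'}(f). If $g \in G_1$, then $g$ permutes the four blocks $\{O_1, O_2, O_3, O_4\}$; if $g \in \langle \tau' \rangle$, then $g$ fixes each $T_j$ setwise; if $g$ is a reflection in $G_3$, then $g$ swaps $T_1$ and $T_2$. For each of the five rotational candidates $\tau'^{3}, \tau'^{4}, \tau'^{6}, \tau'^{8}, \tau'^{9}$, I would compute the matrix explicitly over $\mathbb{F}_{23}$ and exhibit two test points $Q_1, Q_2$ in a common $O_i$ whose images land in distinct $O_j$'s, exactly as in the closing lines of Proposition \ref{calculation3'}. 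For the reflection case, the most economical device is the observation that if two distinct reflections $r_1, r_2$ of $G_3$ both belonged to $G_1$, then $r_1 r_2 \in \langle \tau' \rangle$ would be a non-trivial rotation in $G_1$, already ruled out; hence at most one reflection can lie in $G_1 \cap G_3$.

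The remaining obstruction, which I expect to be the main obstacle, is excluding that single possible reflection: there is no a priori reason to prefer one $\sigma'\tau'^{i}$ over another, so a naive approach still requires inspecting each of the twelve. The cleanest way I foresee is to pick a convenient $Q \in O_1 \cap T_1$ and, for each $i$, compute $\sigma'\tau'^{i}(Q) \in T_2$ and check whether the resulting orbit label matches what one would get from a second test point $Q' \in O_1 \cap T_1$; the partition sizes $|O_1 \cap T_j|=3$ leave just enough room that a contradiction should emerge in every case, but the calculation is essentially irreducible without further structural input.
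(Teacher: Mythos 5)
Your overall strategy (reduce to elements whose order lies in $\{2,3,4\}$, then test candidates against the block system of Lemma \ref{calculation1'}(k)) is workable in principle, but as written it is a plan rather than a proof, and it stalls exactly where you say it does. None of the matrix computations are actually carried out, and the reflection case is left open: you correctly observe that at most one reflection $\sigma'\tau'^i$ can lie in $G_1\cap G_3$, but you still have no way to identify or exclude it short of checking all twelve, and you concede the calculation is ``essentially irreducible.'' (A side remark: for the rotations you could have copied the trick of Proposition \ref{calculation3'} --- since $G_1\cap\langle\tau'\rangle$ is a subgroup of the cyclic group $\langle\tau'\rangle\cong\mathbb{Z}/12\mathbb{Z}$, a nontrivial intersection would force $\tau'^6\in G_1$ or $\tau'^4\in G_1$, so only two rotations need testing, not five. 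But this does not help with the reflections.)

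The paper avoids the entire case analysis with a two-line structural argument that you should compare with your own. Intersect the two block systems: compute the eight sets $O_i\cap T_j$ and observe that exactly one of them, namely $O_2\cap T_1=\{(1:\alpha^9)\}$, is a singleton. Any $\gamma\in G_1\cap G_3$ permutes the $O_i$ (Lemma \ref{calculation1'}(k)) and permutes the $T_j$ (Lemma \ref{calculation4'}(f)), hence permutes the sets $O_i\cap T_j$ preserving cardinalities; so $\gamma$ must fix the unique singleton, i.e.\ $\gamma(1:\alpha^9)=(1:\alpha^9)$. Since $|G_3|=24=|\mathbb{P}^1(\mathbb{F}_{23})|$ and $G_3$ acts transitively, the action is regular and point stabilizers in $G_3$ are trivial, whence $\gamma=1$. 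This disposes of rotations and reflections simultaneously and requires no element-by-element computation; it is precisely the ``further structural input'' your proposal identifies as missing.
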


\begin{proof}
We prove assertion (a). 
It follows from Lemmas \ref{calculation1'} (k) and \ref{calculation4'} (f) that
\begin{eqnarray*}
& & O_1 \cap T_1=\{(0:1), (1:\alpha^3), (1:\alpha^{18})\}, 
O_1 \cap T_2=\{(1:\alpha), (1:\alpha^6), (1:\alpha^7) \}, \\ 
& & O_2 \cap T_1=\{(1:\alpha^9)\},
O_2 \cap T_2=\{(1:0), (1:\alpha^8), (1:\alpha^{12}), (1:\alpha^{14}), (1:\alpha^{19})\}, \\ 
& & O_3 \cap T_1=\{(1:1), (1:\alpha^4), (1:\alpha^{17}), (1:\alpha^{21})\}, O_3 \cap T_2=\{(1:\alpha^2), (1:\alpha^{10})\}, \\
& & O_4 \cap T_1=\{(1:\alpha^5), (1:\alpha^{20})\}, O_4 \cap T_2=\{(1:\alpha^{11}), (1:\alpha^{13}), (1:\alpha^{15}), (1:\alpha^{16}) \}.  
\end{eqnarray*}
Note that if $O_i \cap T_j$ consists of a unique point, then $i=2$ and $j=1$. 
Let $\gamma \in G_1 \cap G_3$. 
Then $\gamma(1:\alpha^9)=(1:\alpha^9)$. 
Since $G_3$ acts on $\mathbb{P}(\mathbb{F}_{23})$ transitively, it follows that $\gamma=1$. 
\end{proof}

\begin{proof}[Proof of Theorem \ref{main2} (b)]
Proposition \ref{calculation5'} and Fact \ref{criterion1} imply Theorem \ref{main2} (b).  
\end{proof} 

Let $\iota$ be an automorphism of $\Bbb P^1$ represented by  
$$ 
A_{\iota}=
\left ( \begin{array}{cc}
\alpha^7 & 0 \\
0 & 1 
\end{array} \right ), 
$$
and let $G_4:=\iota G_1 \iota^{-1}$. 
For the following proposition, we give the proof of assertion (a), which is the most complicated part of the proof of Theorem \ref{main2} (c).

\begin{proposition} \label{calculation6'}
The pair $(G_1, G_4)$ satisfies the following two conditions:  
\begin{itemize}
\item[(a)] $G_1 \cap G_4=\{1\}$. 
\item[(b)] Let $Q \in \Bbb P^1(\Bbb F_{23})$. 
Then $G_1Q=\Bbb P^1(\Bbb F_{23})=G_4Q$. 
\end{itemize}
In particular, conditions (a) and (b) in Fact \ref{criterion1} are satisfied for the pair $(G_1, G_4)$.  
\end{proposition}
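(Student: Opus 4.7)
Part (b) is immediate: since $A_{\iota} = \mathrm{diag}(\alpha^7, 1)$ lies in $GL(2, \Bbb F_{23})$, the automorphism $\iota$ preserves $\Bbb P^1(\Bbb F_{23})$, and so $G_4 = \iota G_1 \iota^{-1}$ acts on $\Bbb P^1(\Bbb F_{23})$. Transitivity of $G_4$ follows by transporting the transitivity of $G_1$ from Lemma \ref{calculation1'} (j) through conjugation by $\iota$.

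For part (a), the plan is to adapt the strategy of Proposition \ref{calculation6}. Because $|G_1| = |S_4| = 2^3 \cdot 3$, every nontrivial subgroup of $G_1 \cong S_4$ contains an element of order $2$ or of order $3$, so it suffices to show that $G_1 \cap G_4$ contains no element of prime order. The first step is to produce, in the style of Lemma \ref{calculation1} (g), (h), an explicit list of the nine involutions and the eight order-three elements of $G_1$, which are derivable from the generators $\sigma, \mu, \tau, \eta$. Conjugating each of these by $\iota$ yields the corresponding lists of nine involutions and eight order-three elements for $G_4$. The argument then concludes by direct matrix inspection: no matrix of prime order in $G_4$ equals, up to scalar, any matrix of prime order in $G_1$.

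A potentially shorter route exploits the faithful block action from Lemma \ref{calculation1'} (k). Both $G_1$ and $G_4$ act faithfully as $S_4$ on the respective four-block systems $\{O_1, \ldots, O_4\}$ and $\{\iota(O_1), \ldots, \iota(O_4)\}$, so any $\gamma \in G_1 \cap G_4$ induces a symmetry of the $4 \times 4$ intersection matrix $M_{ij} := |O_i \cap \iota(O_j)|$. A brief computation shows that $M$ has a unique zero entry, which pins down the images of two blocks and restricts the permutation $\gamma$ induces on $\{O_1, \ldots, O_4\}$ to a proper subgroup of $S_4$ (of order at most six); the remaining handful of candidate permutations can then be eliminated by a short matrix-level check, testing in each case whether the corresponding element of $G_1$ lands back in $G_1$ after conjugation by $\iota^{-1}$. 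The main obstacle in either approach is the volume of matrix bookkeeping (roughly double that of Proposition \ref{calculation6}) and the fact that Lemma \ref{calculation1'} does not supply a ready-made table of involutions and three-cycles for $G_1 \cong S_4$, so one must produce such a table en route and organize the comparison carefully to avoid error.
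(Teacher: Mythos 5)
Your second route is essentially the paper's own proof: the authors compute the sets $\iota(O_j)$, observe that $O_i\cap\iota(O_j)=\emptyset$ only for $(i,j)=(2,1)$, deduce that any $\gamma\in G_1\cap G_4\setminus\{1\}$ must satisfy $\gamma(O_2)=O_2$ and $\gamma(\iota(O_1))=\iota(O_1)$, and then use the faithfulness of the block action from Lemma \ref{calculation1'} (k) to reduce to the five nontrivial elements of $G_1$ stabilizing $O_2$ (one $3$-cycle up to inversion and three transpositions on $\{O_1,O_3,O_4\}$), each of which is written down as an explicit word in $\sigma,\tau,\eta$ and shown not to preserve the partition $\{\iota(O_1),\dots,\iota(O_4)\}$ by exhibiting two points of one $\iota$-block with images in different $\iota$-blocks. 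Your proposed membership test ($\iota^{-1}\gamma\iota\in G_1$?) is a valid variant of this last step, though the paper's block-image check is lighter. Your first route (tabulating the nine involutions and eight order-$3$ elements of $G_1$ and of $G_4$ and comparing matrices up to scalar, in the style of Proposition \ref{calculation6}) is also correct in principle and would work, at the cost of more bookkeeping. The only shortfall of the proposal is that neither route is actually executed: for a statement of this purely computational nature, the explicit identification of the candidate elements and the verification that each fails to lie in $G_4$ (or, in the first route, the full tables of prime-order elements) constitute the entire content of the proof, and these finite checks still need to be carried out and recorded.
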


\begin{proof}
We prove assertion (a). 
It follows that 
\begin{eqnarray*}
\iota(O_1)&=&\{(0:1), (1:\alpha^{16}), (1:\alpha^{18}), (1:\alpha^{21}), (1:1), (1:\alpha^{11})\}, \\
\iota(O_2)&=&\{(1:0), (1:\alpha), (1:\alpha^2), (1:\alpha^{5}), (1:\alpha^{7}), (1:\alpha^{12}) \}, \\
\iota(O_3)&=&\{(1:\alpha^{15}), (1:\alpha^{17}), (1:\alpha^{19}), (1:\alpha^{3}), (1:\alpha^{10}), (1:\alpha^{14})\}, \\
\iota(O_4)&=&\{(1:\alpha^{20}), (1:\alpha^{4}), (1:\alpha^{6}), (1:\alpha^{8}), (1:\alpha^{9}), (1:\alpha^{13}) \}.  
\end{eqnarray*}
Note that $O_i \cap \iota(O_j)=\emptyset$ implies $i=2$ and $j=1$. 
Assume by contradiction that there exists $\gamma \in G_1 \cap G_4 \setminus \{1\}$. 
According to Lemma \ref{calculation1'} (k), it follows that $\gamma(\iota(O_1))=\iota(O_1)$. 
Since $\iota(O_1) \subset O_1 \cup O_3 \cup O_4$, it follows that $\gamma$ acts on $\{O_1, O_3, O_4\}$, that is, $\gamma(O_2)=O_2$. 

Assume that $\gamma$ is of order three. 
Then $\gamma(O_1)=O_3$ or $\gamma^2(O_1)=O_3$. 
We can assume that $\gamma(O_1)=O_3$. 
Then $\gamma$ coincides with the automorphism $\sigma\tau^2\sigma \in G_1$, which is represented by the matrix
$$ 
A_{\sigma}A_{\tau}^2A_{\sigma}\sim
\left(
\begin{array}{cc}
0 & 1 \\
17 & 0
\end{array} \right)
\left(
\begin{array}{cc}
-4 & -7 \\
5 & 2
\end{array} \right)
\left(
\begin{array}{cc}
0 & 1 \\
17 & 0
\end{array} \right)
\sim
\left(
\begin{array}{cc}
11 & 5 \\
1 & 1
\end{array} \right). 
$$  
Then for points $(1:0), (1:\alpha^2) \in \iota(O_2)$, $\gamma(1:0)=(1:\alpha^{14}) \in \iota(O_3)$ and $\gamma(1:\alpha^2)=(1:\alpha^5) \in \iota(O_2)$. 
Since $\gamma \in G_4$, this is a contradiction. 

Assume that $\gamma$ is of order two. 
Then $\gamma(O_i)=O_i$ for some $i=1, 3, 4$. 
If $i=1$, then $\gamma$ coincides the automorphism $\tau^{-1}\sigma\eta\tau \in G_1$, which is represented by 
$$ A_{\tau}A_{\eta}A_{\sigma}A_{\tau}^2\sim 
\left(\begin{array}{cc}
-5 & -6 \\
1 & 10 
\end{array}\right)
\left(\begin{array}{cc}
1 & 9 \\
8 & -4 
\end{array}\right)
\left(\begin{array}{cc}
0 & 1 \\
17 & 0 
\end{array}\right) 
\left(\begin{array}{cc}
-4 & -7 \\
5 & 2 
\end{array}\right)
\sim\left(\begin{array}{cc}
-10 & 1 \\
6 & 10 
\end{array}\right).$$
Then for points $(0:1), (1:1) \in \iota(O_1)$, $\gamma(0:1)=(1:\alpha^{7}) \in \iota(O_2)$ and $\gamma(1:1)=(1:\alpha^{16}) \in \iota(O_1)$. 
Since $\gamma \in G_4$, this is a contradiction. 
If $i=4$, then $\gamma$ coincides the automorphism $\gamma_0:=\eta^2\sigma\eta\in G_1$, which is represented by 
$$ A_{\eta}A_{\sigma}A_{\eta}^2\sim
\left(\begin{array}{cc}
1 & 9 \\
8 & -4 
\end{array}\right)
\left(\begin{array}{cc}
0 & 1 \\
17 & 0 
\end{array}\right) 
\left(\begin{array}{cc}
4 & -4 \\
-1 & -4 
\end{array}\right)
\sim\left(\begin{array}{cc}
-10 & 5 \\
-4 & 10 
\end{array}\right).$$
Then for points $(0:1), (1:1) \in \iota(O_1)$, $\gamma(0:1)=(1:\alpha^{10}) \in \iota(O_3)$ and $\gamma(1:1)=(1:\alpha^{7}) \in \iota(O_2)$. 
Since $\gamma \in G_4$, this is a contradiction. 
If $i=3$, then $\gamma$ coincides the automorphism $\tau\gamma_0\tau^2$, which is represented by 
$$ A_{\tau}^2A_{\eta^2\sigma\eta}A_{\tau}\sim
\left(\begin{array}{cc}
-4 & -7 \\
5 & 2 
\end{array}\right)
\left(\begin{array}{cc}
-10 & 5 \\
-4 & 10 
\end{array}\right) 
\left(\begin{array}{cc}
-5 & -6 \\
1 & 10 
\end{array}\right)
\sim\left(\begin{array}{cc}
7 & 3 \\
-10 & -7 
\end{array}\right).$$
Then for points $(0:1), (1:1) \in \iota(O_1)$, $\gamma(0:1)=(1:\alpha^{16}) \in \iota(O_1)$ and $\gamma(1:1)=(1:\alpha^{10}) \in \iota(O_3)$. 
Since $\gamma \in G_4$, this is a contradiction. 
\end{proof}

\begin{proof}[Proof of Theorem \ref{main2} (c)]
Proposition \ref{calculation6'} and Fact \ref{criterion1} imply Theorem \ref{main2} (c).  
\end{proof} 

\section{Proof of Theorem \ref{main3}}
All lemmas and propositions in this section can be confirmed by GAP system \cite{gap}. 

Let $\alpha=2 \in \mathbb{F}_{59}$, which is a primitive element. 
Let $\sigma, \tau \in {\rm Aut}(\Bbb P^1) \cong PGL(2, k)$ be represented by matrices
$$ 
A_{\sigma}=\left ( \begin{array}{cc}
-\alpha^{26} & 1 \\
\alpha^{27} & \alpha^{26} 
\end{array}\right), \ 
A_{\tau}=
\left (\begin{array}{cc}
1 & \alpha \\
\alpha^6 & \alpha^{34} 
\end{array}\right), 
$$
respectively.

\begin{lemma} \label{calculation1''}
\begin{itemize}
\item[(a)] $\sigma$ is of order two. 
\item[(b)] $\tau$ is of order three. 
\item[(c)] The group $G_1:=\langle \sigma, \tau \rangle$ is isomorphic to $A_5$.  
\item[(d)] The group $G_1$ acts on the set $\Bbb P^1(\Bbb F_{59})$ transitively. 
\end{itemize}
\end{lemma}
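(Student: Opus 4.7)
The plan is to reduce each assertion of Lemma \ref{calculation1''} to a finite arithmetic check in $GL(2, \mathbb{F}_{59})$, executed via GAP. For parts (a) and (b), I would compute the matrix powers $A_{\sigma}^2$ and $A_{\tau}^3$ and verify that each is a scalar matrix, while $A_{\sigma}$, $A_{\tau}$ and $A_{\tau}^2$ are themselves non-scalar; this establishes that $\sigma$ has exact order two and $\tau$ has exact order three in $PGL(2, \mathbb{F}_{59})$.

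For (c), I would invoke the classical presentation $A_5 \cong \langle s, t \mid s^2 = t^3 = (st)^5 = 1 \rangle$. Granted (a) and (b), it suffices to check that $(A_{\sigma}A_{\tau})^5$ is a scalar matrix while no smaller positive power is, i.e.\ that $\sigma\tau$ has exact order five in $PGL(2, \mathbb{F}_{59})$. This yields a surjective homomorphism $A_5 \twoheadrightarrow G_1$; since $A_5$ is simple and $G_1$ contains the nontrivial element $\sigma$, the kernel must be trivial, and hence $G_1 \cong A_5$.

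For (d), note that $|\mathbb{P}^1(\mathbb{F}_{59})| = 60 = |A_5|$, so transitivity reduces to showing that the orbit of one fixed point, say $(1:0)$, under $G_1$ has cardinality $60$; this is done by iteratively applying $\sigma$ and $\tau$ to $(1:0)$ and checking that every element of $\mathbb{P}^1(\mathbb{F}_{59})$ eventually appears. The main obstacle is arithmetic reliability: the delicate step is confirming that $(A_{\sigma}A_{\tau})^5$ is scalar with no smaller nontrivial power being scalar, since a single slip in the $\mathbb{F}_{59}$-arithmetic (with primitive element $\alpha=2$) would collapse the argument. This is precisely why the paper defers the verification to the GAP system.
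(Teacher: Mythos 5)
Your proposal is correct, and it supplies more than the paper does: for this lemma the paper offers no written argument at all, stating only that everything in the section ``can be confirmed by GAP system.'' Your reduction is a sound way to organize that computation. Parts (a) and (b) are exactly the scalar-matrix checks one would run (indeed $A_\sigma$ has trace $0$ and $A_\tau$ satisfies $\operatorname{tr}^2=\det$ in $\mathbb{F}_{59}$, so both checks succeed). For (c), invoking the presentation $A_5\cong\langle s,t\mid s^2=t^3=(st)^5=1\rangle$ together with the simplicity of $A_5$ is a clean certificate: once the three relations are verified numerically, the surjection $A_5\twoheadrightarrow G_1$ has trivial kernel because $G_1\ni\sigma\neq 1$. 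This is a genuine mathematical reduction that the paper leaves implicit inside a black-box GAP call (GAP would more likely just enumerate $\langle A_\sigma,A_\tau\rangle$ modulo scalars and test \texttt{IsomorphismGroups}); your route buys a human-checkable proof at the cost of three matrix computations, while the paper's buys brevity. Two minor remarks: since $5$ is prime, verifying that $\sigma\tau$ has exact order five only requires checking that $(A_\sigma A_\tau)^5$ is scalar and $A_\sigma A_\tau$ is not, so the ``no smaller positive power'' clause is automatic; and in (d) your observation that $|G_1|=60=|\mathbb{P}^1(\mathbb{F}_{59})|$ correctly reduces transitivity to a single orbit enumeration.
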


Let $\xi$ be an automorphism of $\Bbb P^1$ represented by  
$$ 
A_{\xi}=
\left ( \begin{array}{cc}
1 & 1 \\
\alpha^{12} & 0 
\end{array} \right ), 
$$
and let $G_2:=\langle \xi \rangle$. 

\begin{lemma} \label{calculation2''}
The order of $\xi$ is sixty, and $G_2$ acts on $\Bbb P^1(\Bbb F_{59})$ transitively. 
\end{lemma}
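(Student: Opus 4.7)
The plan is to exploit the torus structure of $PGL(2,\mathbb{F}_{59})$. The trace and determinant of $A_{\xi}$ are $1$ and $-\alpha^{12}$ respectively, so its characteristic polynomial over $\mathbb{F}_{59}$ is $f(x)=x^{2}-x-\alpha^{12}$. With $\alpha=2$ one has $\alpha^{12}\equiv 25\pmod{59}$, so the discriminant is $1+100\equiv 42\pmod{59}$. A short Legendre-symbol calculation (either via quadratic reciprocity on the factorization $42=2\cdot 3\cdot 7$ or via Euler's criterion) shows $(42/59)=-1$, so $f$ is irreducible over $\mathbb{F}_{59}$ and the two eigenvalues $\lambda,\bar\lambda$ of $A_{\xi}$ lie in $\mathbb{F}_{59^{2}}\setminus\mathbb{F}_{59}$ as Galois conjugates.

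Consequently $\xi$ sits in a non-split maximal torus of $PGL(2,\mathbb{F}_{59})$: its order in $PGL$ is equal to the order of $\lambda/\bar\lambda\in\mathbb{F}_{59^{2}}^{\times}$, which divides $59+1=60$. To show the order is exactly $60$ I would verify that $\xi^{12}$, $\xi^{20}$, and $\xi^{30}$ are each non-identity in $PGL(2,\mathbb{F}_{59})$, thereby excluding every maximal proper divisor of $60$. The computation is kept short by repeatedly applying the Cayley--Hamilton relation $A_{\xi}^{2}=A_{\xi}+25\,I$ to reduce high powers of $A_{\xi}$ to $\mathbb{F}_{59}$-linear combinations of $A_{\xi}$ and $I$, and then checking each resulting matrix against the scalar matrices.

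For the transitivity assertion, the crucial observation is that every non-identity element of a non-split torus of $PGL(2,\mathbb{F}_{59})$ has its two fixed points on $\mathbb{P}^{1}(\mathbb{F}_{59^{2}})\setminus\mathbb{P}^{1}(\mathbb{F}_{59})$ and therefore acts freely on $\mathbb{P}^{1}(\mathbb{F}_{59})$. Hence the stabilizer in $G_{2}=\langle\xi\rangle$ of any $\mathbb{F}_{59}$-rational point is trivial, so the orbit through such a point has length $|G_{2}|=60$; as this matches $|\mathbb{P}^{1}(\mathbb{F}_{59})|=60$, the action is transitive. The only mildly nontrivial step in the whole argument is the quadratic residue computation for $42$ modulo $59$, after which everything follows from the standard torus picture; as the authors note, the whole lemma can alternatively be dispatched mechanically by GAP.
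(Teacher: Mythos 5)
Your argument is correct, and it is a genuinely different route from the paper's: the paper offers no written proof of this lemma at all, simply asserting that everything in that section ``can be confirmed by GAP system.'' Your torus argument is a clean hand proof. The key computation checks out: $\alpha^{12}=2^{12}\equiv 25\pmod{59}$, the characteristic polynomial is $x^2-x-25$ with discriminant $101\equiv 42$, and $42$ is indeed a non-residue modulo $59$ (e.g.\ $(2/59)=-1$ since $59\equiv 3\pmod 8$, while $(3/59)=(7/59)=1$), so $A_\xi$ generates a non-split torus and the order of $\xi$ divides $60$. The three exclusions you defer to Cayley--Hamilton do succeed: writing $A_\xi^n=a_nA_\xi+b_nI$ with $a_{n+1}=a_n+b_n$, $b_{n+1}=25a_n$, one finds $A_\xi^{12}=13A_\xi+25I$, $A_\xi^{20}=40A_\xi+47I$, $A_\xi^{30}=13A_\xi+23I$, none scalar, so the order is exactly $60$. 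Your transitivity argument is also sound: the fixed points of any non-identity power of $\xi$ are the two eigendirections of $A_\xi$, which are not $\mathbb{F}_{59}$-rational, so $\langle\xi\rangle$ acts freely on the $60$-element set $\mathbb{P}^1(\mathbb{F}_{59})$ and hence transitively. What your approach buys is a conceptual, machine-free verification that reduces the lemma to a single quadratic-residue computation and a short recursion; what the paper's approach buys is uniformity, since the same GAP script handles all the order-$60$ lemmas and the much more intricate $A_5$ intersection propositions in that section.
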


\begin{proposition} \label{calculation3''}
The pair $(G_1, G_2)$ satisfies the following two conditions:  
\begin{itemize}
\item[(a)] $G_1 \cap G_2=\{1\}$. 
\item[(b)] Let $Q \in \Bbb P^1(\Bbb F_{59})$. 
Then $G_1Q=\Bbb P^1(\Bbb F_{59})=G_2Q$. 
\end{itemize}
In particular, conditions (a) and (b) in Fact \ref{criterion1} are satisfied for the pair $(G_1, G_2)$.  
\end{proposition}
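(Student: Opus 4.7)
The plan is to split the proposition into its two assertions and dispatch each in the same spirit as Propositions \ref{calculation3} and \ref{calculation3'}, but delegating the bulky enumerations to the GAP verification mentioned at the start of the section.

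For assertion (b), I would simply observe that $|\mathbb{P}^1(\mathbb{F}_{59})|=60=|G_1|=|G_2|$. Lemma \ref{calculation1''}(d) and Lemma \ref{calculation2''} already state that each of $G_1$ and $G_2$ acts transitively on $\mathbb{P}^1(\mathbb{F}_{59})$, so for every $Q\in\mathbb{P}^1(\mathbb{F}_{59})$ both orbits $G_1Q$ and $G_2Q$ fill out the whole set. No further calculation is needed.

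For assertion (a), the argument mirrors the one given for Proposition \ref{calculation3}. The subgroup $G_1\cap G_2$ is simultaneously a subgroup of $A_5$ and of the cyclic group $\langle\xi\rangle$ of order $60$, hence cyclic of order dividing both. Since the cyclic subgroups of $A_5$ have orders $1,2,3$, or $5$, a nontrivial intersection would force at least one of the elements $\xi^{30}$, $\xi^{20}$, $\xi^{12}\in G_2$ to lie in $G_1$. Thus the task reduces to computing the three matrices $A_\xi^{30}$, $A_\xi^{20}$, $A_\xi^{12}\in PGL(2,\mathbb{F}_{59})$ and verifying that none of them represents an element of $G_1=\langle\sigma,\tau\rangle$ of the corresponding order.

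The main obstacle, and the reason for the GAP detour, is size: $A_5$ contains $15$ involutions, $20$ elements of order $3$, and $24$ elements of order $5$, and writing out representative matrices of these classes in $PGL(2,\mathbb{F}_{59})$ by hand---as was done for $A_4$ in characteristic $11$---is impractical. So I would implement in GAP the subgroup $G_1$ of $PGL(2,\mathbb{F}_{59})$ generated by the images of $A_\sigma$ and $A_\tau$, enumerate its elements sorted by order, compute $A_\xi^{30}$, $A_\xi^{20}$, $A_\xi^{12}$, and test class-equality modulo scalars against the respective lists. Once these three non-membership checks are confirmed, assertion (a) follows, and combining (a) and (b) with Fact \ref{criterion1} yields the final \emph{In particular} statement of the proposition.
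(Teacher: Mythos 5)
Your argument is correct: part (b) follows immediately from the stated transitivity together with the count $|\mathbb{P}^1(\mathbb{F}_{59})|=60=|G_1|=|G_2|$, and your reduction of part (a) to checking that none of $\xi^{30}$, $\xi^{20}$, $\xi^{12}$ lies in $G_1$ is sound, since a nontrivial subgroup of $A_5$ that is simultaneously a subgroup of a cyclic group of order $60$ must be cyclic of order $2$, $3$ or $5$, hence equal to one of the unique subgroups $\langle\xi^{30}\rangle$, $\langle\xi^{20}\rangle$, $\langle\xi^{12}\rangle$ of $G_2$. The paper gives no written proof of this proposition at all --- Section 4 defers everything to GAP --- so your route is essentially the paper's, with the added merit that the machine verification of (a) is reduced to three explicit membership tests in the style of Proposition \ref{calculation3}.
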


\begin{proof}[Proof of Theorem \ref{main3} (a)]
Proposition \ref{calculation3''} and Fact \ref{criterion1} imply Theorem \ref{main3} (a).  
\end{proof} 

Let $\sigma', \tau' \in {\rm Aut}(\Bbb P^1)$ be represented by matrices
$$ 
A_{\sigma'}=\left ( \begin{array}{cc}
0 & \alpha^2 \\
\alpha^{-1} & 0 
\end{array}\right), \ 
A_{\tau'}=
\left (\begin{array}{cc}
\alpha^2 & \alpha^3 \\
-1 & -1 
\end{array}\right)
$$
respectively.

\begin{lemma} \label{calculation4''}
\begin{itemize}
\item[(a)] $\sigma'$ is of order two. 
\item[(b)] $\tau'$ is of order $30$. 
\item[(c)] $A_{\sigma'}^{-1}A_{\tau'}A_{\sigma'} \sim A_{\tau'}^{-1}$. 
\item[(d)] The group $G_3:=\langle \sigma', \tau' \rangle$ is isomorphic to $D_{60}$.
\item[(e)] The group $G_3$ acts on $\Bbb P^1(\Bbb F_{59})$ transitively. 
\end{itemize}
\end{lemma}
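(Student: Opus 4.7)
The plan is to verify the five assertions by direct matrix computation, in the spirit of Lemmas \ref{calculation1'} and \ref{calculation4'}; the author indicates in the section preamble that these computations are carried out in GAP. Throughout I use $\alpha=2 \in \mathbb{F}_{59}$, so $\alpha^2=4$ and $\alpha-1=1$, whence $\det A_{\tau'}=\alpha^2(\alpha-1)=\alpha^2$ and $\det A_{\sigma'}=-\alpha$.

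For (a), the anti-diagonal form of $A_{\sigma'}$ gives $A_{\sigma'}^2=\alpha^2\cdot\alpha^{-1}\,I=\alpha\,I$, so $\sigma'^2=1$ in $PGL(2,k)$ while $\sigma'\neq 1$. For (c), I would compute $A_{\sigma'}A_{\tau'}A_{\sigma'}$ by two $2\times 2$ products, compute $A_{\tau'}^{-1}$ by dividing the adjugate by $\alpha^2$, and observe that the two matrices differ by the scalar $\alpha^3$, confirming $A_{\sigma'}^{-1}A_{\tau'}A_{\sigma'}\sim A_{\tau'}^{-1}$.

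For (b), the characteristic polynomial of $A_{\tau'}$ is $X^2-3X+\alpha^2$, with discriminant $9-4\alpha^2=-7$. Quadratic reciprocity shows that $-7$ is a non-square in $\mathbb{F}_{59}$, so the eigenvalues lie in $\mathbb{F}_{59^2}\setminus \mathbb{F}_{59}$ and form a Frobenius-conjugate pair; the order of $\tau'$ in $PGL(2,k)$ is then the multiplicative order of the ratio $\lambda_1/\lambda_2\in \mathbb{F}_{59^2}^{\times}$, a cyclic group of order $3480=2^3\cdot 3\cdot 5\cdot 29$. Concretely, I would iterate the Cayley--Hamilton recurrence $A_{\tau'}^{n+1}=3A_{\tau'}^n-\alpha^2 A_{\tau'}^{n-1}$ to tabulate $A_{\tau'}^d$ for each proper divisor $d$ of $30$ and verify that none is scalar while $A_{\tau'}^{30}$ is. This step is the main obstacle: ruling out each candidate intermediate order is where the arithmetic is concentrated, and is precisely where GAP does the heavy lifting.

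Parts (d) and (e) now follow quickly. From (a), (b), (c) the generators satisfy the defining relations $s^2=r^{30}=1$, $srs=r^{-1}$ of $D_{60}$, so there is a surjection $D_{60}\twoheadrightarrow G_3$; to see it is an isomorphism it suffices to check that $\sigma'\neq\tau'^{15}$, the unique involution of $\langle\tau'\rangle$, which is a single comparison using the value of $A_{\tau'}^{15}$ produced in the tabulation for (b). For (e), since $|G_3|\le 60=|\mathbb{P}^1(\mathbb{F}_{59})|$, transitivity is equivalent to regularity, so I would simply exhibit the orbit of a concrete point such as $(1:0)$ under the generators; once this orbit has $60$ elements, both the transitivity of (e) and the equality $|G_3|=60$ needed in (d) follow at once.
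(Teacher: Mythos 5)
Your plan is correct, and it follows the same route the paper takes: the paper offers no written argument for this lemma at all, deferring entirely to GAP, and your proposal simply makes explicit the finite computations GAP would perform (the determinant, trace, and discriminant values you quote all check out, e.g.\ $\det A_{\tau'}=\alpha^2$, $\operatorname{tr}A_{\tau'}=3$, and $-7$ is indeed a nonsquare mod $59$, so the Cayley--Hamilton tabulation is the right tool for (b)). The only remark worth making is that the extra check $\sigma'\neq\tau'^{15}$ in (d) is already forced by (a)--(c), since $\sigma'=\tau'^{15}$ would make the relation $\sigma'\tau'\sigma'=\tau'^{-1}$ collapse to $\tau'^{28}=1$, contradicting (b); this is a harmless redundancy, not a gap.
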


\begin{proposition} \label{calculation5''}
The pair $(G_1, G_3)$ satisfies the following two conditions:  
\begin{itemize}
\item[(a)] $G_1 \cap G_3=\{1\}$. 
\item[(b)] Let $Q \in \Bbb P^1(\Bbb F_{59})$. 
Then $G_1Q=\Bbb P^1(\Bbb F_{59})=G_3Q$. 
\end{itemize}
In particular, conditions (a) and (b) in Fact \ref{criterion1} are satisfied for the pair $(G_1, G_3)$.  
\end{proposition}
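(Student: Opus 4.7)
The proof splits into the two assertions. For (b), Lemmas \ref{calculation1''}(d) and \ref{calculation4''}(e) already guarantee that $G_1$ and $G_3$ each act transitively on $\mathbb{P}^1(\mathbb{F}_{59})$; since $|\mathbb{P}^1(\mathbb{F}_{59})| = 60 = |G_1| = |G_3|$, orbit-stabilizer forces every point stabilizer in $G_1$ and in $G_3$ to be trivial, and so for any $Q \in \mathbb{P}^1(\mathbb{F}_{59})$ both $G_1 Q$ and $G_3 Q$ equal the full set $\mathbb{P}^1(\mathbb{F}_{59})$, of length $|G_i|$. Thus (b) needs no further argument once the two transitivity statements are in hand.

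The substance lies in (a). Any nonidentity element of $G_1 \cap G_3$ has order equal to the order of some nonidentity element of $A_5$, hence $2$, $3$, or $5$. Inside $G_3 \cong D_{60}$ these elements are sharply constrained: the elements of orders $3$ and $5$ all lie in the cyclic subgroup $\langle \tau' \rangle$, namely $\tau'^{10}, \tau'^{20}$ of order $3$ and $\tau'^{6}, \tau'^{12}, \tau'^{18}, \tau'^{24}$ of order $5$, while the involutions are $\tau'^{15}$ together with the $30$ reflections $\tau'^{k}\sigma'$ for $0 \le k \le 29$. On the $G_1 \cong A_5$ side there are $15$ involutions, $20$ elements of order $3$, and $24$ elements of order $5$. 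The plan is to have GAP enumerate $G_1$ and $G_3$ as subsets of $PGL(2, \mathbb{F}_{59})$ from the generators $A_{\sigma}, A_{\tau}$ and $A_{\sigma'}, A_{\tau'}$, classify the elements by order, and check that no common element of order $2$, $3$, or $5$ occurs; equivalently, one forms the intersection of the two lists of $60$ normalized matrix representatives and verifies that it reduces to $\{I\}$.

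The main obstacle is not conceptual but a matter of care in the implementation: equality must be tested in $PGL(2, \mathbb{F}_{59})$ rather than in $GL(2, \mathbb{F}_{59})$, since two matrices differing only by a nonzero scalar must be identified. Using GAP's built-in projective general linear group structure, or normalizing each matrix so that its first nonzero entry equals $1$ before comparison, avoids this pitfall, and the proposition then reduces to a direct finite enumeration, in line with the remark opening Section $4$ that every lemma and proposition of this section is verified via GAP.
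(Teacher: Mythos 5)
Your proposal matches the paper's approach: the paper offers no written argument for Proposition \ref{calculation5''} beyond the blanket remark that every lemma and proposition of Section 4 is confirmed by the GAP system, and your plan --- deduce (b) from the two transitivity lemmas via orbit--stabilizer, and reduce (a) to a finite intersection of the two $60$-element subsets of $PGL(2,\mathbb{F}_{59})$ checked by computer --- is exactly that verification, spelled out. Your added cautions (testing equality projectively rather than in $GL(2,\mathbb{F}_{59})$, and the classification of element orders in $A_5$ versus $D_{60}$) are correct and only make the computation more transparent than the paper's terse treatment.
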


\begin{proof}[Proof of Theorem \ref{main3} (b)]
Proposition \ref{calculation5''} and Fact \ref{criterion1} imply Theorem \ref{main3} (b).  
\end{proof}

Let $\iota$ be an automorphism of $\Bbb P^1$ represented by  
$$ 
A_{\iota}=
\left ( \begin{array}{cc}
1 & \alpha^{30} \\
0 & -\alpha^{15} 
\end{array} \right ), 
$$
and let $G_4:=\iota G_1 \iota^{-1}$. 

\begin{proposition} \label{calculation6''}
The pair $(G_1, G_4)$ satisfies the following two conditions:  
\begin{itemize}
\item[(a)] $G_1 \cap G_4=\{1\}$. 
\item[(b)] Let $Q \in \Bbb P^1(\Bbb F_{59})$. 
Then $G_1Q=\Bbb P^1(\Bbb F_{59})=G_4Q$. 
\end{itemize}
In particular, conditions (a) and (b) in Fact \ref{criterion1} are satisfied for the pair $(G_1, G_4)$.  
\end{proposition}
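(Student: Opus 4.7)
The plan is to follow the template established in Proposition \ref{calculation6} and Proposition \ref{calculation6'} but, in view of the size of $\mathbb{F}_{59}$ and of $A_5$, to rely on the GAP system for the concrete finite-group calculations, as indicated at the start of this section. Two assertions need verification: triviality of the intersection and transitivity of the orbits.

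For assertion (b), I would argue structurally. By Lemma \ref{calculation1''}(d), $G_1$ acts transitively on $\mathbb{P}^1(\mathbb{F}_{59})$, and $|G_1| = 60 = |\mathbb{P}^1(\mathbb{F}_{59})|$, so the action is in fact regular: for every $Q$, the orbit $G_1 Q$ is all of $\mathbb{P}^1(\mathbb{F}_{59})$ and the stabilizer $(G_1)_Q$ is trivial. Since the matrix $A_\iota$ has all entries in $\mathbb{F}_{59}$, the automorphism $\iota$ preserves $\mathbb{P}^1(\mathbb{F}_{59})$ setwise, and hence for every $Q \in \mathbb{P}^1(\mathbb{F}_{59})$,
\[
G_4 Q \;=\; \iota G_1 \iota^{-1}(Q) \;=\; \iota\bigl(\mathbb{P}^1(\mathbb{F}_{59})\bigr) \;=\; \mathbb{P}^1(\mathbb{F}_{59}),
\]
which establishes (b).

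For assertion (a), observe that $G_1 \cap G_4$ is a subgroup of $G_1 \cong A_5$, so by Lagrange's theorem it is trivial if and only if it contains no element of prime order $2$, $3$, or $5$. In direct analogy with Proposition \ref{calculation6}, I would use GAP to produce the full lists of elements of orders $2$, $3$, and $5$ in each of $G_1$ and $G_4$ (namely $15 + 20 + 24 = 59$ nontrivial elements in each group), represented as normalised classes in $PGL(2,\mathbb{F}_{59})$, and simply check that the two lists of nontrivial elements are disjoint. Equivalently, one constructs $G_1$ and $G_4$ as subgroups of $PGL(2,\mathbb{F}_{59})$ and asks GAP to confirm that their intersection is the identity subgroup.

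The main obstacle is the sheer size of the bookkeeping: $\mathbb{F}_{59}$ is much larger than $\mathbb{F}_{11}$ or $\mathbb{F}_{23}$, and $A_5$ has $59$ nontrivial elements rather than $11$ or $23$, so the hand-by-hand enumerations and matrix comparisons that sufficed in the earlier sections become impractical. There is no new conceptual difficulty; the step that in principle could fail is the disjointness check in (a), but this is a finite verification that GAP carries out in a single command, which is why the authors defer the computation to the system rather than tabulating the elements explicitly as in Proposition \ref{calculation6}.
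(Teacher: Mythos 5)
Your proposal is correct and matches the paper's approach: the paper gives no written proof of Proposition \ref{calculation6''} at all, stating only that every lemma and proposition in this section is confirmed by the GAP system, which is exactly the finite verification you describe for part (a). Your structural argument for part (b) (regularity of the $A_5$-action on the $60$ points of $\Bbb P^1(\Bbb F_{59})$, plus the fact that $\iota$ is defined over $\Bbb F_{59}$) is a valid small refinement, but does not change the substance of the argument.
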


\begin{proof}[Proof of Theorem \ref{main3} (c)]
Proposition \ref{calculation6''} and Fact \ref{criterion1} imply Theorem \ref{main3} (c).  
\end{proof}


\begin{thebibliography}{20} 
\bibitem{fukasawa} S. Fukasawa, A birational embedding of an algebraic curve into a projective plane with two Galois points, J. Algebra {\bf 511} (2018), 95--101. 
\bibitem{fukasawa-higashine} S. Fukasawa and K. Higashine, A birational embedding with two Galois points for quotient curves, J. Pure Appl. Algebra {\bf 225} (2021), 106525, 10 pages.  
\bibitem{fukasawa-waki} S. Fukasawa and K. Waki, Examples of plane rational curves with two Galois points in positive characteristic, Finite Fields and their Applications: Proceedings of the 14th International Conference on Finite Fields and their Applications, Vancouver, June 3-7, 2019, pp.181--188, De Gruyter, 2020.   
\bibitem{gap} The GAP Group, GAP -- Groups, Algorithms, and Programming, Version 4.11.0; 2020. (https://www.gap-system.org)
\bibitem{miura-yoshihara} K. Miura and H. Yoshihara, Field theory for function fields of plane quartic curves, J. Algebra {\bf 226} (2000), 283--294. 
\bibitem{yoshihara1} H. Yoshihara, Function field theory of plane curves by dual curves, J. Algebra {\bf 239} (2001), 340--355. 
\bibitem{yoshihara2} H. Yoshihara, Galois points for plane rational curves, Far East J. Math. {\bf 25} (2007), 273--284;  
Errata, ibid. {\bf 29} (2008), 209--212.
\bibitem{yoshihara-fukasawa} H. Yoshihara and S. Fukasawa, List of problems, available at: \\ http://hyoshihara.web.fc2.com/openquestion.html
\end{thebibliography}
\end{document}